\long\def\Notes#1\endNotes{\begin{small}#1\end{small}}
\long\def\Notes#1\endNotes{}
\newcommand{\C}{{\mathbb C}}
\renewcommand{\H}{{\mathbb H}}
\newcommand{\Z}{{\mathbb Z}}
\newcommand{\R}{{\mathbb R}}
\newcommand{\Q}{{\mathbb Q}}
\newcommand{\calC}{{\mathcal C}}
\newcommand{\head}[1]{\iota #1}
\newcommand{\tail}[1]{\tau #1}
\newcommand{\NAH}{NAH}
\newcommand{\I}{\mathcal I}
\newcommand{\Isom}{\operatorname{Isom}}
\newcommand{\wt}{\widetilde}
\newtheorem{theorem}{Theorem}[section]
\newtheorem*{theorem*}{Theorem}
\newtheorem{conjecture}[theorem]{Conjecture}
\newtheorem{ccconjecture}[theorem]{Cusp Covering Conjecture}
\newtheorem{proposition}[theorem]{Proposition}
\newtheorem*{th1}{Classification Theorem}
\newtheorem*{th2}{Realization Theorem}
\newtheorem*{th3}{Commensurablity Theorem}
\theoremstyle{definition}
\newtheorem{definition}[theorem]{Definition}
\newtheorem{property}[theorem]{Property}
\newtheorem{remark}[theorem]{Remark}
\begin{document}
\title 
{Quasi-isometric classification of non-geometric 
3-manifold groups}
\author
{Jason A. Behrstock} \address{Department of Mathematics\\ Lehman 
College, CUNY}
\email{\href{mailto:jason@math.columbia.edu}{jason.behrstock@lehman.cuny.edu}}
\author
{Walter D.  Neumann} \thanks{Research supported under NSF grant no.\
  DMS-0604524} \address{Department of Mathematics\\Barnard College,
  Columbia University}
\email{\href{mailto:neumann@math.columbia.edu}{neumann@math.columbia.edu}}
\keywords{three-manifold, quasi-isometry, commensurability}
\subjclass[2000]{Primary 20F65; Secondary 57N10, 20F36}%
\begin{abstract}
  We describe the quasi-isometric classification of fundamental groups
  of irreducible non-geometric 3-manifolds which do not have ``too
  many'' arithmetic hyperbolic geometric components, thus completing
  the quasi-isometric classification of $3$--manifold groups in all
  but a few exceptional cases.
\end{abstract}
\maketitle

\section{Introduction}

In \cite{behrstock-neumann} we discussed the quasi-isometry
classification of fundamental groups of $3$--manifolds (which
coincides with the bilipschitz classification of the universal covers
of three-manifolds). This classification reduces easily to the case of
irreducible manifolds. Moreover, no generality is lost by considering
only orientable manifolds. So from now on we only consider compact
connected orientable irreducible $3$--manifolds of zero Euler
characteristic (i.e., with boundary consisting only of tori) since
these are the orientable manifolds which, by Perelman's 
Geometrization Theorem 
\cite{Perelman:Geom1,Perelman:Geom2,Perelman:Geom3},
decompose along tori and Klein bottles into geometric pieces (this
decomposition removes the boundary tori and a family of embedded tori,
so the pieces of the decomposition are without boundary). The minimal
such decomposition is what is called the \emph{geometric decomposition}.

We described (loc.~cit.) the classification for geometric
$3$--manifolds as well as for non-geometric $3$--manifolds with no
hyperbolic pieces in their geometric decomposition (i.e.,
graph-manifolds).  For geometric manifolds this was a summary of work
of others; our contribution was in the non-geometric case. In this
paper we extend to allow hyperbolic pieces. However, our results are
still not quite complete: at present we exclude manifolds with ``too
many'' arithmetic hyperbolic pieces\footnote{The essential remaining
  case to address is when \emph{all} pieces are arithmetic
  hyperbolic. These behave rather differently from the other
  cases---more like arithmetic hyperbolic manifolds.}, and some of our
results are only proved assuming the ``cusp covering conjecture'' in
dimension 3 (see below and Section \ref{sec:ccc}).

In the bulk of this paper we restrict to non-geometric manifolds, all
of whose geometric components are hyperbolic and at least one of which
is non-arithmetic (we will call these \emph{\NAH-manifolds} for
short).  In the final section \ref{sec:seif} we extend to the case
where Seifert fibered pieces are also allowed.

The classification for graph-manifolds in \cite{behrstock-neumann} was
in terms of finite labelled graphs; the labelling consisted of a color
black or white on each vertex and the classifying objects were such
two-colored graphs which are minimal under a relation called
\emph{bisimilarity}.  For \NAH-manifolds the classification is again in
terms of finite labelled graphs, and the the classifying objects are
again given by labelled graphs that are minimal in a similar
sense. The labelling is more complex: each vertex is labelled by the
isomorphism type of a hyperbolic orbifold\footnote{``Hyperbolic
  orbifold'' always means an orientable complete hyperbolic
  $3$--orbifold of finite volume.} and each edge is labelled by a
linear isomorphism between certain 2-dimensional $\Q$--vectorspaces.
We will call these graphs \emph{\NAH-graphs}.  The ones that classify
are the ones that are \emph{minimal} and  \emph{balanced}. All these concepts
will be defined in Section \ref{sec:NAHgraph}.

Finally, when both Seifert fibered and hyperbolic pieces occur the
classifying graphs are a hybrid of the two-colored graphs and the
\NAH-graphs, so we call them \emph{H-graphs}.  In this
case we need that every component of the manifold obtained by removing
all Seifert fibered pieces is \NAH. We will call irreducible
non-geometric manifolds of this type \emph{good}. For example, 
$3$--manifolds which contain a hyperbolic piece but 
no arithmetic hyperbolic piece provide a large family of good 
manifolds.

The following three theorems summarize our main results. 
The second two theorems complement the first, by making 
the classification effective, and then relating  
the quasi-isometric and commensurability classication for some 
\NAH-manifolds; both of these latter two results 
currently need CCC$_3$: the Cusp Covering Conjecture (Conjecture
\ref{cuspcoveringconj}) in dimension $3$.

\begin{th1}
  Each good $3$--manifold has an associated minimal H-graph and two
   such manifolds have quasi-isometric fundamental groups (in fact,
  bilipschitz equivalent universal covers) if and only
  if their minimal H-graphs are isomorphic. 
\end{th1}

\begin{th2}
  The minimal H-graph associated to a good $3$--manifold is
  balanced. Assuming CCC$_3$, the converse
  is true, namely if a minimal
  H-graph is balanced, then it is the minimal H-graph for some 
  quasi-isometry class of good $3$--manifold groups.
\end{th2}
\begin{th3}
  Assuming CCC$_3$, if two \NAH-manifolds have quasi-isometric fundamental
  groups and their common minimal \NAH-graph is a tree with manifold
  labels then they (and in particular, their fundamental groups) are
  commensurable.
\end{th3}

See Remark~\ref{remark:RFCH} for a
discussion of our use of CCC$_3$. Although CCC would follow from RFCH (residual finiteness conjecture
for word hyperbolic groups), it is not clear how much confidence one
should have in RFCH. We feel that CCC is more plausible, and 
of independent interest. In any case, we expect the conclusions of
the Realization and Commensurability Theorems 
to be true regardless. On the other
hand, we do not know to what extent our restrictive conditions on the
minimal \NAH-graph in the Commensurability Theorem 
are needed; 
whether ``\NAH-manifolds are quasi-isometric if and only
if they are commensurable'' holds in complete generality remains a
very interesting open question. 

\smallskip A slightly surprising byproduct of this investigation are
the \emph{minimal orbifolds} of section \ref{sec:minimal orbifold}, which
play a role similar to commensurator quotients but exist also for
cusped arithmetic hyperbolic orbifolds. Although their existence is
easy to prove, they were new to us.  
Minimal orbifolds are precisely the orbifolds that can
appear as vertex labels of minimal \NAH-graphs.

\section{Minimal orbifolds}\label{sec:minimal orbifold}
We consider only orientable manifolds and orbifolds. Let $N$ be a
hyperbolic $3$--orbifold 
(not necessarily
non-arithmetic) with at least one cusp. Then each cusp of $N$ has a
smallest cover by a toral cusp (one with toral cross section) and this
cover has cyclic covering transformation group $F_C$ of order $1$,
$2$, $3$, $4$, or $6$. We call this order the ``orbifold degree of the
cusp.''

\begin{proposition} Among the orbifolds $N'$ covered by $N$ having the same
  number of cusps as $N$ with each cusp of $N'$ having the same orbifold
  degree as the cusp of $N$ that covers it, there is a unique one,
  $N_0$, that is covered by all the others.  We call this $N_0$ a
  \emph{minimal orbifold}.

  More generally, for each cusp of $N$ specify a ``target'' in
  $\{1,2,3,4,6\}$ that is a multiple of the orbifold degree and ask
  that the corresponding cusp of $N'$ have orbifold degree dividing
  this target. The same conclusion then holds.
\end{proposition}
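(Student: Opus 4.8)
The plan is to translate the statement into the language of Kleinian groups and then extract a unique top element from a finite poset. Write $N=\H^3/\Gamma$ with $\Gamma\le\PSL_2(\C)$ a lattice. An orbifold $N'$ covered by $N$ with finite degree is the same as a discrete finite-index overgroup $\Gamma\le\Gamma'\le\PSL_2(\C)$, with $N'=\H^3/\Gamma'$, and $N_0$ being covered by all the competing $N'$ is the same as the corresponding group $\Gamma_0$ containing every competing $\Gamma'$. So I would first record what the two cusp conditions say about such a $\Gamma'$. The cusps of $N'$ are the $\Gamma'$-orbits of parabolic fixed points, and each such orbit is a union of $\Gamma$-orbits; hence requiring $N'$ to have the same number of cusps as $N$ is exactly the condition that $\Gamma'$ preserve each $\Gamma$-orbit of cusps, i.e.\ that no cusps merge. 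At a fixed point $p$, the stabilizer $\operatorname{Stab}_{\Gamma'}(p)$ is a Euclidean wallpaper group whose point group is cyclic of order in $\{1,2,3,4,6\}$, and that order is precisely the orbifold degree; so the second condition says that passing from $\Gamma$ to $\Gamma'$ may enlarge the translation lattice at each cusp but must leave the point-group order fixed. Call an overgroup satisfying both conditions \emph{admissible} (and, for the generalization, \emph{admissible for the chosen targets} if at each cusp the point-group order merely divides the prescribed target).

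Next I would bound the index. Since there is a universal positive lower bound $v_0$ for the volume of an orientable cusped hyperbolic $3$--orbifold, every overgroup satisfies $[\Gamma':\Gamma]=\operatorname{vol}(N)/\operatorname{vol}(N')\le\operatorname{vol}(N)/v_0$, so the index is bounded independently of $\Gamma'$. Combined with the standard fact that a fixed lattice has only finitely many overlattices of bounded index, this shows that the set of admissible overgroups is finite. The role of the cusp conditions is not to bound the index (the volume bound already does that) but to single out a sub-poset closed under the operation of taking the group generated by two members, which is what forces a top element: without them, in the arithmetic case the unrestricted overgroups have several maximal elements and there is no commensurator quotient, and it is exactly the cusp conditions that restore a unique maximum.

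The heart of the argument is therefore to show that the admissible overgroups are closed under joins, namely that if $\Gamma_1$ and $\Gamma_2$ are admissible then so is $\langle\Gamma_1,\Gamma_2\rangle$. Granting this, the finite poset of admissible overgroups has $\Gamma_0:=\langle\text{all admissible }\Gamma'\rangle$ as its unique maximum, $N_0=\H^3/\Gamma_0$ is the asserted minimal orbifold, and the generalization to prescribed targets is identical since ``point-group order divides the target'' is preserved under the same operation. To prove closure I would first dispose of the non-arithmetic case, where $\Gamma_1,\Gamma_2\subseteq\operatorname{Comm}(\Gamma)$ and the commensurator is itself a lattice, so the join is automatically discrete of finite covolume; one then checks from the definitions that joining can neither merge two $\Gamma$-orbits of cusps nor raise a point-group order above the common value already shared by $\Gamma_1$ and $\Gamma_2$.

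The arithmetic case is the main obstacle, precisely because $\operatorname{Comm}(\Gamma)$ is dense and there is no ambient discrete group in which to sit. Here I expect the cusp data to do the work: the conditions ``no merged cusps'' and ``fixed point-group orders'' pin down the maximal parabolic subgroups of $\langle\Gamma_1,\Gamma_2\rangle$ at each cusp, and from this control of the peripheral structure, together with the a priori index bound, one shows that the join is still a lattice and still admissible. Converting control of the cusps into discreteness of the join in the arithmetic setting is where the real work lies, and it is the step I would expect to occupy the bulk of the argument.
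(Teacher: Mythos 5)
Your translation into overgroups of $\Gamma\le\PSL_2(\C)$ is set up correctly (covered-by corresponds to containment, ``same number of cusps'' to $\Gamma'$ preserving each $\Gamma$-orbit of parabolic fixed points, orbifold degree to the point-group order of the wallpaper stabilizer), and your finiteness step is fine, though it turns out to be unnecessary. But the proposal has a genuine gap at exactly the step that constitutes the entire content of the proposition: closure of the admissible overgroups under joins. In the arithmetic case you concede outright that you have no argument for discreteness of $\langle\Gamma_1,\Gamma_2\rangle$, so the proposal is a program rather than a proof. Moreover, even your non-arithmetic half is not the routine check you claim. While the no-merging condition does pass to joins trivially (every generator preserves each $\Gamma$-orbit of parabolic points), the point-group condition does not: the stabilizer of a parabolic point $p$ in $\langle\Gamma_1,\Gamma_2\rangle$ is in general strictly larger than $\langle\operatorname{Stab}_{\Gamma_1}(p),\operatorname{Stab}_{\Gamma_2}(p)\rangle$, because a mixed word $g_1g_2$ with neither factor fixing $p$ may fix $p$, and nothing in the definitions bounds its rotational part by the common order $k$. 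The abelianness of $SO(2)$ controls only the subgroup generated by the two stabilizers, not the full stabilizer of $p$ in the join, so ``one checks from the definitions'' is not available.

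The paper avoids the poset and the join problem entirely by a direct construction that makes both difficulties evaporate simultaneously. Neuter $N$ to get $N^{0}$; on the universal cover $\widetilde N^{0}$, decorate each boundary plane with a label recording which boundary component of $N^{0}$ it covers, and with a finite set of $n_C$ oriented foliations (one direction chosen per cusp, transported around by $\pi_1^{orb}(N^{0})$, enlarged by rotations through multiples of $2\pi/n_C$ when the target $n_C$ properly exceeds the cusp degree). Then $G_N$ is defined as the group of \emph{all} orientation-preserving isometries of $\widetilde N^{0}$ preserving labels and the foliation collection, and $N_0:=\widetilde N^{0}/G_N$. Discreteness is free --- any group of isometries of $\H^3$ mapping a family of at least three disjoint horoballs to itself is discrete --- so the arithmetic/non-arithmetic dichotomy never enters; label preservation forces no merging; and preserving the $n_C$ foliations bounds the rotational part of \emph{every} element stabilizing a plane, mixed words included, which is precisely the control your approach lacks. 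Maximality then follows because for any admissible $N'$ the decoration can be chosen identically for $N$ and $N'$ (same universal cover, the equal cusp count gives a label bijection, and the initial direction for $N$ can be taken inside $N'$'s foliation collection, with $G_N$ independent of these choices), whence $\pi_1^{orb}(N'^{0})\le G_{N'}=G_N$ and $N'$ covers $N_0$. Note that if you wanted to rescue your poset scheme, this decoration is exactly what would prove your join-closure lemma: every admissible overgroup lies in the single discrete group $G_N$, hence so does any join --- but at that point $G_N$ already is the unique maximum and the poset is redundant.
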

\begin{proof}

  We first ``neuter'' $N$ by removing disjoint open horoball
  neighborhoods of the cusps to obtain a compact orbifold with
  boundary which we call $N^{0}$. If we prove the proposition for
  $N^{0}$, with boundary components interpreted as cusps, then it
  holds for $N$.

  Let $\widetilde N^{0}$ be the universal cover of $N^{0}$. Any
  boundary component $C$ of $N^{0}$ is isomorphic to the quotient of
  a euclidean plane $\widetilde C$ by the orbifold fundamental group
  $\pi_1^{orb}(C)$, which is an extension of a lattice $\Z^2$ by the
  cyclic group $F_C$ of order $1$, $2$, $3$, $4$, or $6$. We choose an
  oriented foliation of this plane by parallel straight lines and
  consider all images by covering transformations of this foliation on
  boundary planes of $\widetilde N^{0}$.  Each boundary plane of
  $\widetilde N^{0}$ that covers $C$ will then have $|F_C|$ oriented
  foliations, related by the action of the cyclic group $F_C$.  If the
  target degree $n_C$ for the given cusp is a proper multiple of
  $|F_C|$, we also add the foliations obtained by rotating by
  multiples of $2\pi/n_C$.

  We give the boundary planes of $\widetilde N^{0}$ different labels
  according to which boundary component of $N^{0}$ they cover, and we
  construct foliations on them as above.  So each boundary plane of
  $\widetilde N^{0}$ carries a finite number ($1$, $2$, $3$, $4$, or
  $6$) of foliations and the collection of all these oriented
  foliations and the labels on boundary planes are invariant under the
  covering transformations of the covering $\widetilde N^{0}\to
  N^{0}$.  Let $G_N$ be the group of all orientation preserving
  isometries of $\widetilde N^{0}$ that preserve the labels of the
  planes and preserve the collection of oriented foliations. Note that
  $G_N$ does not depend on choices: The only relevant choices are the
  size of the horoballs removed when neutering and the direction of
  the foliation we first chose on a boundary plane of $\widetilde
  N^{0}$. If we change the size of the neutering and rotate the
  direction of the foliation then the size of the neutering and
  direction of the image foliations at all boundary planes with the
  same label change the same amount, so the relevant data are still
  preserved by $G_N$. It is clear that $G_N$ is discrete (this is true
  for any group of isometries of $\H^3$ which maps a set of at least
  three disjoint horoballs to itself).  $N^{0}_0:=\widetilde
  N^{0}/G_N$ is thus an oriented orbifold; it is clearly covered by
  $N^{0}$, has the same number of boundary components as $N^{0}$, and
  the boundary component covered by a boundary component $C$ of
  $N^{0}$ has orbifold degree dividing the chosen target degree
  $n_C$. Moreover if $N^{0}_1$ is any cover of $N^{0}_0$ with the same
  property, then $\widetilde N_1^{0}=\widetilde N^{0}$ and the
  labellings and foliations on $\widetilde N_1^{0}$ and $\widetilde
  N^{0}$ can be chosen the same, so $N^{0}_0$ is the minimal orbifold
  also for $N^{0}_1$. The proposition thus follows.
\end{proof}

Note that a minimal orbifold may have a non-trivial isometry group (in
contrast with commensurator quotients of non-arithmetic hyperbolic
manifolds).

\section{\NAH-graphs}\label{sec:NAHgraph}
The graphs we need will be finite, connected, undirected graphs. We take
the viewpoint that an edge of an undirected graph consists of a
pair of oppositely directed edges. The reversal of a directed edge $e$
will be denoted $\bar e$ and the initial and terminal vertices of
an edge will be denoted $\head{e}$ and $\tail{e}$ ($=\head{\bar e}$).

We will label the vertices of our graph by hyperbolic orbifolds
so we first introduce some terminology for these.
A horosphere section $C$ of a cusp of a hyperbolic orbifold $N$ will
be called the \emph{cusp orbifold}. Although the position of $C$ as a
horosphere section of the cusp involves choice, as a flat
$2$--dimensional orbifold, $C$ is canonically determined up to
similarity by the cusp.  We thus have one cusp orbifold for each cusp
of $N$.

Since a cusp orbifold $C$ is flat, its tangent space $TC$ is
independent of which point on $C$ we choose (up to the action of the
finite cyclic group $F_C$). $TC$ naturally contains the maximal
lattice $\Z^2\subset \pi_1^{orb}(C)$, so it makes sense to talk of a
linear isomorphism between two of these tangent spaces as being
\emph{rational}, i.e., given by a rational matrix with respect to
oriented bases of the underlying integral lattices.  Moreover, such a
rational linear isomorphism will have a well-defined determinant.

\begin{definition}[\NAH-graph]\label{def:NAH-graph}
  An \emph{\NAH-graph} is a finite connected graph with the following data
  labelling its vertices and edges:
\begin{enumerate}
\item Each vertex $v$ is labelled by a hyperbolic orbifold $N_v$ plus
  a map $e\mapsto C_e$ from the set of directed edges $e$ exiting that
  vertex to the set of cusp orbifolds $C_e$ of $N_v$. This map is
  injective, except that an edge which begins and ends at the same
  vertex may have $C_e=C_{\bar e}$.
\item The cusp orbifolds $C_e$ and $C_{\bar e}$  have the same
  orbifold degree.
\item Each directed edge $e$ is labelled by a rational linear
  isomorphism $\ell_e\colon TC_e\to TC_{\bar e}$, with $\ell_{\bar
    e}=\ell_e^{-1}$. Moreover $\ell_e$ reverses orientation (so
  $\det(\ell_e)<0$).
\item\label{NAH-graph(4)} We only need $\ell_e$ up to right
  multiplication by elements of $F_e:=F_{C_e}$; in other words, the
  relevant datum is really the coset $\ell_e F_e$ rather than
  $\ell_e$ itself. This necessitates:
\item $\ell_e$ conjugates the cyclic group $F_e$ to the cyclic
  group $F_{\bar e}$, i.e., $\ell_e F_e =F_{\bar
      e}\ell_e$ (this holds automatically if the orbifold degree
  is $\le2$; otherwise it is equivalent to saying that $\ell_e$
  is a similarity for the euclidean structures).
\item At least one vertex label $N_v$ is a non-arithmetic hyperbolic
  orbifold.
\end{enumerate}
\end{definition}
\begin{definition}\label{def:integral NAH}
We call an \NAH-graph \emph{balanced} if the product of the
determinants of the linear maps on edges around any closed directed
path is equal to $\pm1$.

We call the \NAH-graph \emph{integral} if each $\ell_e$ is an integral
linear isomorphism (i.e., an isomorphism of the underlying
$\Z$--lattices).  Integral clearly implies balanced. 
\end{definition}
\begin{definition}\label{def:assoc int}
  An integral \NAH-graph contains precisely the information to specify
  how to glue neutered versions $N^{0}_v$ of the orbifolds $N_v$
  together along their boundary components to obtain a \NAH-orbifold
  $M$, called the \emph{associated orbifold}. Conversely, any
  \NAH-orbifold $M$ has an \emph{associated integral \NAH-graph}
  $\Gamma(M)$, which encodes its decomposition into geometric pieces.
\end{definition}

\smallskip We next want to define morphisms of \NAH-graphs. 

\begin{definition}[Morphism]\label{def:morphism}
 A \emph{morphism} of
  \NAH-graphs,  $\Gamma\to\Gamma'$, consists of the following data:
\begin{enumerate}
\item\label{data1} an abstract graph homomorphism $\phi\colon\Gamma\to
  \Gamma'$, and 
\item\label{data2} for each vertex $v$ of $\Gamma$, a covering map
  $\pi_v\colon N_v\to N_{\phi(v)}$ of the orbifolds labelling $v$ and
  $\phi(v)$ which respects cusps (so for each
  departing edge $e$ at $v$ one has $\pi_v(C_e)=C_{\phi(e)}$), subject
  to the condition:
\item\label{cond3}
for each directed edge $e$ of $\Gamma$
the following diagram commutes
\begin{equation*}
\xymatrix{
TC_e\ar[d]\ar[r]^{\ell_e}&TC_{\bar e}\ar[d]\\
TC_{\phi(e)}\ar[r]^{\ell_{\phi(e)}}&TC_{\phi(\bar e)}
}  
\end{equation*}
\end{enumerate}
Here the vertical arrows are the induced maps of tangent spaces and
commutativity of the diagram is  up to the indeterminacy of
item \eqref{NAH-graph(4)} of Definition \ref{def:NAH-graph}.
\end{definition}

Morphisms compose in the obvious way, so a morphism is an
\emph{isomorphism} if and only if $\phi$ is a graph isomorphism and
each $\pi_v$ has degree 1.
\begin{definition}
  An \NAH-graph is \emph{minimal} if every morphism to another
  \NAH-graph is an isomorphism.
\end{definition}

The relationship of existence of a morphism between
  \NAH-graphs generates an equivalence relation which we call
  \emph{bisimilarity}. 
\begin{theorem}\label{th:minimal}  
  Every bisimilarity class of \NAH-graphs contains a unique (up to
  isomorphism) minimal member, so every \NAH-graph in the bisimilarity
  class has a morphism to it.
\end{theorem}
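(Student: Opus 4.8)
The plan is to lift the construction in the proof of the Proposition of Section~\ref{sec:minimal orbifold} from a single orbifold to the entire tree of pieces determined by an \NAH-graph. To each \NAH-graph $\Gamma$ I would associate a ``universal cover'' $X_\Gamma$: a tree of copies of neutered $\H^3$, one for each vertex of the Bass--Serre-type tree lying over $\Gamma$, glued along boundary planes by the tangent maps $\ell_e$ and decorated with labels and oriented foliations on the boundary planes exactly as in that proof. The first step is to verify that a morphism $\Gamma\to\Gamma'$ induces an isomorphism $X_\Gamma\xrightarrow{\sim}X_{\Gamma'}$ of these decorated trees: each covering map $\pi_v\colon N_v\to N_{\phi(v)}$ lifts to an isometry $\widetilde N_v\to\widetilde N_{\phi(v)}$ of universal covers, and the commutativity condition \eqref{cond3} of Definition~\ref{def:morphism} guarantees that these isometries respect the gluings and carry foliations to foliations. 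Hence bisimilar graphs have isomorphic universal covers, and I would fix one such $X$ for the whole bisimilarity class.

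The second step is the two-way dictionary between graphs in the class and subgroups of $G:=\operatorname{Aut}(X)$, the group of orientation-preserving, label- and foliation-preserving isometries of $X$. Each $\Gamma$ in the class is recovered as a quotient $X/\Gamma^{*}$, where $\Gamma^{*}\le G$ is the deck group of the covering $X\to\Gamma$ of graphs-of-orbifolds, and a morphism $\Gamma\to\Gamma'$ corresponds to a subconjugacy $\Gamma^{*}\hookrightarrow(\Gamma')^{*}$ whose associated covering has degree $[(\Gamma')^{*}:\Gamma^{*}]$; thus it is an isomorphism precisely when the two deck groups coincide up to conjugacy. Setting $\Gamma_0:=X/G$, every $\Gamma$ in the class satisfies $\Gamma^{*}\le G$ and so admits a morphism $\Gamma\to\Gamma_0$, while $\Gamma_0$ is minimal because $G$ is the maximal possible deck group. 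If $\Gamma_1,\Gamma_2$ are any two minimal members, each admits a morphism to $\Gamma_0$, which by minimality must be an isomorphism, so $\Gamma_1\cong\Gamma_0\cong\Gamma_2$; this delivers existence, universality, and uniqueness simultaneously. (Existence alone also follows more cheaply from the fact that volumes of hyperbolic $3$--orbifolds form a well-ordered set, so a graph minimizing $\sum_v\operatorname{vol}(N_v)$ in the class is automatically minimal.)

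The crux, and the step I expect to be the main obstacle, is showing that $G=\operatorname{Aut}(X)$ is discrete and that $X/G$ is a genuine finite \NAH-graph, so that $\Gamma_0$ is well defined. Discreteness of isometries preserving a fixed family of at least three disjoint horoballs is the same elementary fact used in the Proposition, but finiteness of the quotient and the rigidity needed to pin down $G$ rely essentially on item~(6) of Definition~\ref{def:NAH-graph}, that at least one vertex carries a \emph{non-arithmetic} orbifold. Indeed, by Margulis's characterization such a piece has finite index in its commensurator, so an automorphism of $X$ cannot deform that piece continuously; this rigidity propagates across the tree through the rational gluing maps $\ell_e$ and forces $G$ to be discrete with finite quotient. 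This is exactly the ingredient that fails in the excluded all-arithmetic case, where the commensurator is dense and no finite minimal quotient need exist. Granting it, $\Gamma_0=X/G$ is a finite connected graph whose vertex orbifolds are the quotients of the $\widetilde N_v$ by their induced stabilizers---these are precisely the minimal orbifolds of Section~\ref{sec:minimal orbifold}---and whose descended edge data satisfy the axioms of Definition~\ref{def:NAH-graph}, completing the proof.
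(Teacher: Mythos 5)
Your overall strategy is the paper's: the paper also builds a universal space for the bisimilarity class (gluing copies of the universal covers $\widetilde N_v$ along an infinite tree over $\Gamma$), shows a morphism $\Gamma\to\Gamma'$ forces the spaces to agree, and extracts the minimal graph $m(\Gamma)=\Omega(X(\Gamma))$ as the quotient by the full symmetry group, with uniqueness following exactly as in your third step. But there is a genuine gap at your Step 1, and it is the step the paper's technology exists to handle. The edge labels of an \NAH-graph are only the linear cosets $\ell_e F_e$ (item \eqref{NAH-graph(4)} of Definition \ref{def:NAH-graph}); to glue two boundary planes you need an \emph{affine} map, and its translational part is completely undetermined --- a full $\R^2\rtimes F_S$ of choices per gluing, in the paper's notation. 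An isometry of a piece preserving your labels and foliations restricts to each boundary plane through a \emph{discrete} group, so it can absorb only countably many of this continuum of choices: generic different choices of translations yield non-isomorphic decorated trees, and the lifts of the covering maps $\pi_v$ respect the gluings only up to translation (condition \eqref{cond3} matches derivatives, not affine maps). So ``$X_\Gamma$ is well defined'' and ``a morphism induces an isomorphism $X_\Gamma\xrightarrow{\sim}X_{\Gamma'}$'' both fail in the honest-isometry category, and with them your dictionary between graphs in the class and subgroups of $\operatorname{Aut}(X)$. The paper's remedy is to interpolate slabs $\R^2\times I$ between pieces, prove $X(\Gamma)$ well defined only up to \emph{bilipschitz diffeomorphism} via uniformly bounded shears (the induction in the proof of Proposition \ref{prop:th1}, which needs $\Gamma$ finite so that only finitely many isometry classes of boundary tori occur), and replace $\operatorname{Aut}(X)$ by $\I(X(\Gamma))$, the straightened quasi-isometries --- isometries on pieces, isometry-plus-shear on slabs --- whose very definition rests on Kapovich--Leeb \cite{KapovichLeeb:haken} and Schwartz \cite{Schwartz:RankOne}. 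Since $\Omega$ is a bilipschitz invariant, $m(\Gamma)$ is then constant on the class.

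Your ``crux'' paragraph also misallocates the difficulty. Finiteness of the quotient is automatic, not deep: $\I(X(\Gamma))$ contains the deck transformations of $X(\Gamma)$ over $\Gamma$, so its orbits on pieces are unions of the finitely many deck orbits, giving at most one vertex of $m(\Gamma)$ per vertex of $\Gamma$. Discreteness of the piece stabilizers is the same elementary fact you cite from Section \ref{sec:minimal orbifold} (a group of isometries of $\H^3$ preserving at least three disjoint horoballs is discrete); Margulis's commensurator characterization is not invoked anywhere in this argument. Where non-arithmeticity actually enters is upstream, in making $\I$ well defined: it permits the uniform choice of neuterings via standard commensurator quotients so that quasi-isometries can be straightened to honest isometries on pieces, with the propagation argument of Section \ref{sec:nah graphs} anchoring any arithmetic pieces to a non-arithmetic one. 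Tellingly, your isometry-based $G$ would make no use of non-arithmeticity at all, which should have been a warning sign: the all-arithmetic case fails at the straightening and normalization stage, not at discreteness or finiteness. (Your volume-minimizing parenthetical would at best give existence --- and only after ruling out non-surjective and vertex-collapsing morphisms --- but existence is not the hard part; uniqueness is.)
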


\NAH-graphs play an analogous role to the two-colored graphs that we
used in \cite{behrstock-neumann} to classify graph-manifold groups up
to quasi-isometry. For those graphs the morphisms were called ``weak
coverings''---they were color preserving open graph homomorphisms
(i.e., open as maps of $1$--complexes). The term
``bisimilarity'' was coined in that context, since the concept is known to
computer science under this name. Theorem \ref{th:minimal} has a proof
analogous to the proof we gave in \cite{behrstock-neumann} for
two-colored graphs. We give a different proof, which we postpone to
the next section, since it follows naturally from the discussion there
(an analogous proof works in the two-colored graph case).

\begin{theorem}\label{th:balanced}
  If\/ $\Gamma\to\Gamma'$ is a morphism of \NAH-graphs and $\Gamma$ is
  balanced, then so is $\Gamma'$.  In particular, a bisimilarity class
  contains a balanced \NAH-graph if and only if its minimal \NAH-graph
  is balanced.
\end{theorem}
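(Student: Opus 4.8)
The plan is to reduce the ``in particular'' clause to the first assertion and then to track what a morphism does to edge determinants. For the reduction: if a bisimilarity class contains a balanced \NAH-graph $\Gamma$, then by Theorem \ref{th:minimal} there is a morphism $\Gamma\to\Gamma_{\min}$ to its minimal member, so the first assertion gives that $\Gamma_{\min}$ is balanced; the converse is immediate since $\Gamma_{\min}$ itself lies in the class. Everything thus comes down to showing that a single morphism $\phi\colon\Gamma\to\Gamma'$ carries balanced to balanced. To extract the local effect of $\phi$, I would apply $\det$ (computed with respect to the integral lattices, as in Section \ref{sec:NAHgraph}) to the commuting square of Definition \ref{def:morphism}\eqref{cond3}, obtaining for each directed edge $e$ of $\Gamma$
\[
|\det \ell_{\phi(e)}| \;=\; |\det\ell_e|\cdot\frac{|\det \psi_{\bar e}|}{|\det \psi_e|},
\]
where $\psi_e\colon TC_e\to TC_{\phi(e)}$ and $\psi_{\bar e}\colon TC_{\bar e}\to TC_{\phi(\bar e)}$ are the tangent maps induced by the covering maps $\pi_{\head e}$ and $\pi_{\tail e}$.

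The next step is the identification $|\det \psi_e| = d_e$, where $d_e$ is the degree of the induced covering $C_e\to C_{\phi(e)}$ of cusp cross-sections. Indeed, since $\pi_{\head e}$ is a local isometry it carries the maximal lattice $\Z^2\subset\pi_1^{orb}(C_e)$ isomorphically onto a finite-index sublattice of $\Z^2\subset\pi_1^{orb}(C_{\phi(e)})$, and that index is exactly the covering degree of the underlying flat tori. So each vertical determinant records a cusp-covering degree, and the displayed relation says that $\phi$ multiplies $|\det\ell_e|$ by the correction factor $d_{\bar e}/d_e$.

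The essential structural input is that a morphism is a covering of the \emph{whole} geometric decomposition, not just of the individual vertex orbifolds. Because the edge-to-cusp correspondence at each vertex is a bijection, the covering maps $\pi_v$ patch together compatibly along the cusps (by the same commuting square) into a finite covering of degree $D$ of the associated graph of groups. Covering-space theory then yields, for each directed edge $f$ of $\Gamma'$, the degree-sum relation $\sum_{e\colon\phi(e)=f}d_e = D$: the $D$ sheets lying over a cusp of $N_{\head f}$ are partitioned according to the cusps of the covering orbifolds lying above it. To finish I would fix a closed directed path $\omega'$ in $\Gamma'$ and lift it through this covering; its preimage is a disjoint union of closed paths in $\Gamma$ that together cover $\omega'$ exactly $D$ times, with each edge $e$ over a given edge $f$ of $\omega'$ traversed $d_e$ times. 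Applying balance of $\Gamma$ (Definition \ref{def:integral NAH}) to each lifted closed path, multiplying, and substituting the local relation expresses $\bigl(\prod_{\omega'}|\det\ell'|\bigr)^{D}$ purely in terms of the covering degrees weighted by their traversal multiplicities, which one then argues equals $1$; since the determinant product is a positive real, $\prod_{\omega'}|\det\ell'|=1$ and $\Gamma'$ is balanced.

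I expect the hard part to be precisely this final cancellation. The correction factors $d_{\bar e}/d_e$ do \emph{not} cancel path-by-path: at each vertex a lifted path enters and leaves through distinct cusps whose covering degrees typically differ, so one cannot telescope a single lifted loop. The cancellation is genuinely global and must invoke balance of $\Gamma$ on \emph{all} of its cycles simultaneously, together with the degree-sum identities $\sum_{e\mapsto f}d_e=D$; the simplest indicator of why it succeeds is the single-vertex case, where balance forces every loop label to have determinant $\pm1$, the correction factors $d_{\bar e}/d_e$ to be a common constant across the lifts, and the two equal degree-sums to pin that constant to $1$. Carrying this mechanism through the multiplicity-$d_e$ transfer bookkeeping for an arbitrary cycle is where the argument needs to be made with care.
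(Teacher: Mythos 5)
Your reduction of the ``in particular'' clause via Theorem~\ref{th:minimal} is fine, and your local determinant relation is exactly the paper's equation~\eqref{eq:bal}, $d_e\delta_{\phi(e)}=\delta_e d_{\bar e}$ (with the caveat that $|\det\psi_e|=d_e$ is literally true only for torus cusps; for orbifold cusps there is a common correction factor $|F|$ which cancels in the ratio, as the paper notes). The genuine gap is in your global step. A morphism of \NAH-graphs is a bisimulation-type map, \emph{not} a finite covering, and it has no global degree: the correct degree-sum identity is $\sum_{e\text{ at }v,\,\phi(e)=f}d_e=\deg\pi_v$, a quantity that varies with the vertex $v$ over $\head{f}$, and even the totals over all of $\phi^{-1}(f)$ computed at the two ends of $f$ disagree --- summing \eqref{eq:bal} gives $\sum_e d_{\bar e}=\sum_e d_e\,\delta_f/\delta_e$, which (say, for $\Gamma$ integral) differs from $\sum_e d_e$ by exactly the factor $\delta_f$. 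So positing $\sum_{\phi(e)=f}d_e=D$ simultaneously for all directed edges $f$ is false in general, and to the extent it holds it already encodes the balance of $\Gamma'$ that you are trying to prove; the argument is circular at this point. The cycle-lifting step inherits the problem: the preimage of a closed path need not decompose into closed directed paths traversing each edge $e$ exactly $d_e$ times, since at a vertex of the preimage the number of edges over the incoming edge of the cycle need not equal the number over the outgoing edge --- only the $d$-weighted flow is conserved. Decomposing that circulation into weighted directed cycles and applying balance of $\Gamma$ leaves you with an identity of the shape $D^{W}=\prod_e(d_{\bar e}/d_e)^{d_e}$ for some total winding $W$, with no evident reason the right-hand side equals $1$. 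You concede precisely this (``which one then argues equals $1$''), but that cancellation is the entire content of the theorem, so the proposal stops where the proof must begin.

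For comparison, the paper's mechanism (credited in part to Zagier) is a potential-function and conservation argument rather than a covering argument: fix a simple directed cycle $\calC$ in $\Gamma'$ with determinant product $D$, restrict to one connected component $\Gamma_0$ of its preimage, and pull back the infinite cyclic cover $\widetilde\calC\to\calC$ to $\widetilde\Gamma_0\to\Gamma_0$. Balancedness of $\Gamma$ makes the path products $\prod d_{\bar e_i}/d_{e_i}$ in $\widetilde\Gamma_0$ depend only on endpoints, yielding a vertex potential $c(v)$ with $c(h(v))=Dc(v)$ for the deck transformation $h$; the normalized weight $\zeta(e)=d_e/c(\head{e})$ is then symmetric under $e\leftrightarrow\bar e$, and the vertexwise identity $\sum_{\mathrm{out}}d_e=\sum_{\mathrm{in}}d_{\bar e}=\deg\pi_v$ forces the level sums $Z(j)$ to be constant in $j$. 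Since equivariance gives $Z(j+n)=D^{-1}Z(j)$ and $Z(j)>0$, one concludes $D=1$. This positivity-plus-conservation argument on the infinite cyclic cover is exactly the ``genuinely global'' cancellation your sketch anticipates but does not supply; without it (or a substitute), the proposal does not prove the theorem.
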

\begin{proof} We will denote
  the negative determinant of the linear map labelling an edge $e$ of an
  \NAH-graph by $\delta_e$, so $\delta_e=\delta_{\bar e}^{-1}>0$.

  Assume $\Gamma$ is balanced. We will show $\Gamma'$ is balanced.  We
  only need to show that the product of the $\delta_e$'s along any
  simple directed cycle in $\Gamma'$ is $1$.  Let $\calC=(e'_0,e'_1,
  \dots, e'_{n-1})$, be such a cycle, so $\tail{e'_i}=\head{e'_{i+1}}$
  for each $i$ (indices modulo $n$). Denote
\begin{equation}
    \label{eq:D}
D:=\prod_{i=0}^{n-1} \delta_{e'_i}\,,
\end{equation}
so we need to prove that $D=1$.

From now on we will restrict attention to one connected component
$\Gamma_0$ of the full inverse image of $\calC$ under the map $\Gamma\to
\Gamma'$.

Let $e$ be an edge of $\Gamma_0$ which maps to an edge $e'$ of $\calC$
($e'$ may be an $e'_i$ or an $\bar e'_i$). Its start and end
correspond to cusp orbifolds which cover
the cusp orbifolds corresponding to start and end of $e'$ with
covering degrees that we shall call $d_e$ and $d_{\bar e}$
respectively. We claim that
  \begin{equation}
    \label{eq:bal}
    d_e\delta_{e'}=\delta_ed_{\bar e}\,.
  \end{equation}
  Indeed, if the cusp orbifolds are all tori, this equation (multiplied
  by $-1$) just
  represents products of determinants for the two ways of going around
  the commutative diagram in part \eqref{cond3} of Definition
  \ref{def:morphism}.  In general, $d_e$ and $d_{\bar e}$ are the
  determinants multiplied by the order of the cyclic group $F_e$, but
  this is the same factor for each of $d_e$ and $d_{\bar e}$ so the
  equation remains correct.

  Let $\widetilde \calC\to \calC$ be the infinite cyclic cover of
  $\calC$, and $\widetilde \Gamma_0\to \Gamma_0$ the pulled back
  infinite cyclic cover of $\Gamma_0$. Define $d_e$ or $\delta_e$ for
  an edge of either of these covers as the value for the image edge.
  
If $(e_1,e_2,\dots,e_m)$ is a directed path in $\widetilde \Gamma_0$
  then the product of the equations \eqref{eq:bal} over all edges of
  this path gives
$\prod_{i=1}^md_{e_i}\delta_{\pi e_i}=\prod_{i=1}^m\delta_{e_i}d_{\bar
  e_i}$, so
\begin{equation*}
  \prod_{i=1}^m\frac{\delta_{\pi
    e_i}}{\delta_{e_i}}=\prod_{i=1}^m\frac{d_{\bar
    e_i}}{d_{e_i}}\,.
\end{equation*}
Since $\widetilde\Gamma_0$ and $\widetilde
\calC$ are both balanced, it follows that $\prod_{i=1}^m\frac{d_{\bar
    e_i}}{d_{e_i}}$ only depends on the start and end vertex of the
path.  Thus, if we fix a base vertex $v_0$ of $\widetilde \Gamma_0$
and define
\begin{equation}
  \label{eq:c(v)}
  c(v):=\prod_{i=1}^m\frac{\delta_{\pi
    e_i}}{\delta_{e_i}}=\prod_{i=1}^m\frac{d_{\bar
    e_i}}{d_{e_i}}\quad\text{for any path from $v_0$ to $v$}\,,
\end{equation}
we get a well defined invariant of the vertices of $\widetilde
\Gamma_0$. For an edge $e$ this
invariant satisfies
$$\frac{c(\head{\bar e})}{c(\head{e})}=
\frac{c(\tail{e})}{c(\head{e})}=\frac{d_{\bar e}}{d_e}\,,$$
whence
$$\zeta(e):=\frac{d_e}{c(\head{e})}$$ 
is an invariant of the undirected edge: $\zeta(e)=\zeta(\bar e)$.

Denote the map $\widetilde \Gamma_0\to \widetilde \calC$ by $\pi$.
Number the vertices sequentially along $\widetilde \calC$ by integers
$i\in \Z$, and for each vertex $v$ of $\widetilde \Gamma_0$ let
$i(v)\in \Z$ be the index of $\pi(v)$. Let
$h\colon\widetilde\Gamma_0\to\widetilde\Gamma_0$ be the covering
transformation: $h(v)$ is the vertex of $\widetilde \Gamma_0$ with the
same image in $\Gamma_0$ as $v$ but with $i(h(v))=i(v)+n$.

Note that the equations \eqref{eq:D} and \eqref{eq:c(v)} and the fact
that $\Gamma_0$ is balanced implies that $c(h(v))=Dc(v)$ for any
vertex $v$ of $\widetilde \Gamma_0$, so
\begin{equation}\label{eq:zeta}
  \zeta(h(e))=D^{-1}\zeta(e)\,.
\end{equation}

From now on consider the edges of $\widetilde \Gamma_0$ directed only
in the direction of increasing $i(v)$. Denote by $Z(j)$ the sum of
$\zeta(e)$ over all edges with $i(\tail{e})=j$.

The sum of the $d_e$'s over outgoing edges at a vertex $v$ of
$\widetilde \Gamma_0$ equals the sum of $d_{\bar e}$'s over incoming
edges at $v$, since each sum equals the degree of the covering map
from the orbifold labelling $v$ to the one labelling vertex $\pi(v)$
of $\widetilde \calC$. Since $\zeta(e)=d_e/c(v)$ for an outgoing edge
and $\zeta(\bar e)=d_{\bar e}/c(v)$ for an incoming one, the sum of
$\zeta(e)$ over outgoing edges at $v$ equals the sum of $\zeta(e)$ for
incoming ones.  Thus $Z(j)$ is the sum of $\zeta(e)$ over all edges
with $i(\head{e})=j$. These are the edges with $i(\tail{e})=j+1$,
so $Z(j)=Z(j+1)$. Thus $Z(j)$ is independent of $j$. Clearly $Z(j)>0$.
Equation \eqref{eq:zeta} implies $Z(j+n)=D^{-1}Z(j)$, so $D=1$, as was
to be proved. (We are grateful to Don Zagier for help with this
proof.)
\end{proof}
We close this section with an observation promised in the introduction.
\begin{proposition}
  An orbifold can be a vertex label in a minimal \NAH-graph if and
  only if it is a minimal orbifold.
\end{proposition}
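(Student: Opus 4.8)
The plan is to prove the two implications separately.

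For the forward direction, suppose $N$ is the label $N_v$ of a vertex $v$ of a minimal \NAH-graph $\Gamma$, and assume for contradiction that $N$ is not a minimal orbifold. I would restrict attention to morphisms that fix the underlying graph and every vertex label except $N_v$. For such a morphism the cusp orbifold at the far endpoint of each edge leaving $v$ is unchanged, so condition (2) of Definition~\ref{def:NAH-graph}, applied in both $\Gamma$ and the target, forces the orbifold degree of each cusp of the new label at $v$ to equal that of the corresponding cusp of $N$. Hence any covering $N\to N'$ underlying such a morphism preserves both the number of cusps and all cusp orbifold degrees. Since $N$ is assumed non-minimal, the Proposition of Section~\ref{sec:minimal orbifold} (with each target degree taken equal to the actual orbifold degree) supplies a proper covering $\pi_v\colon N\to N_0$ with exactly this cusp data. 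I would then transport the edge labels through $\pi_v$ so that the squares of Definition~\ref{def:morphism} commute (taking $\ell_e'=\ell_e\circ(d\pi_v)^{-1}$ on edges leaving $v$ toward a fixed neighbour, and the evident conjugate on loops); these stay rational because the induced cusp covering is a rational inclusion of lattices, and conditions (2)--(5) survive because $\pi_v$ preserves orbifold degrees and orientations. The result is an \NAH-graph $\Gamma'$ together with a morphism $\Gamma\to\Gamma'$ of degree $>1$ at $v$, contradicting minimality. (Condition (6) persists since arithmeticity is a commensurability invariant.) Thus $N$ is a minimal orbifold.

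For the converse, let $N$ be a minimal orbifold; I would build an explicit minimal \NAH-graph carrying $N$. To each cusp $C_i$ of $N$, of orbifold degree $d_i$, I attach a \emph{partner} vertex labelled by a non-arithmetic hyperbolic orbifold $P_i$ that coincides with its own commensurator quotient (so $P_i$ covers no orbifold properly) and has a cusp of orbifold degree exactly $d_i$, with the $P_i$ chosen pairwise non-isomorphic. I would glue $C_i$ to that cusp of $P_i$ by a rational edge label $\ell_{e_i}$: when $d_i\le 2$ any orientation-reversing rational linear isomorphism serves, while when $d_i\in\{3,4,6\}$ both cross-sections are forced to be square or hexagonal and hence similar, so a rational similarity exists and condition (5) holds. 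This yields an \NAH-graph $\Gamma$ with central label $N$, and condition (6) holds because the $P_i$ are non-arithmetic.

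It then remains to show $\Gamma$ is minimal. Given any morphism $\Gamma\to\Gamma'$, each $\pi_{P_i}$ is a covering of the maximal orbifold $P_i$ and hence has degree $1$; consequently the label at $\phi(P_i)$ is again $P_i$, whose cusp has degree $d_i$. Because the $P_i$ are pairwise non-isomorphic, the graph map $\phi$ can neither fold two edges at $v$ nor collapse two partners, so $\phi$ is injective on the edges at $v$; combined with surjectivity of $\pi_v$ onto the cusps this makes $\pi_v$ a bijection on cusps. Condition (2) in $\Gamma'$ together with the rigidity of the $P_i$ then pins each cusp degree at $v$ to $d_i$, so $\pi_v\colon N\to N_{\phi(v)}$ preserves all cusp data; minimality of $N$ (via the Proposition of Section~\ref{sec:minimal orbifold}) forces $\deg\pi_v=1$. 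Hence every $\pi_w$ has degree $1$ and $\phi$ is a graph isomorphism, so the morphism is an isomorphism and $\Gamma$ is minimal. Note that a nontrivial isometry of $N$, which may exist by the remark following that Proposition, only ever produces an automorphism of $\Gamma$ and never a proper morphism, so it is no obstruction. The one genuine obstacle is the existence of the rigid partners $P_i$: one must know that for each $d\in\{1,2,3,4,6\}$ there are infinitely many pairwise non-commensurable non-arithmetic orbifolds equal to their own commensurator quotients and carrying a cusp of orbifold degree exactly $d$. I expect this to follow from the abundance of non-arithmetic cusped orbifolds, but it is the step that requires the most care.
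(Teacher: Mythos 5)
Your proposal is correct and takes essentially the same route as the paper: the forward direction via the canonical morphism pushing edge labels through the commuting squares while replacing a non-minimal vertex label by its minimal orbifold (the paper does this replacement at all vertices simultaneously rather than by contradiction at one vertex), and the converse via a star-shaped graph whose outer vertices are maximal non-arithmetic commensurator quotients with cusps of the prescribed degrees. Your refinements---choosing the partners pairwise non-isomorphic to rule out edge-folding, and spelling out why every morphism is then an isomorphism---merely make explicit what the paper asserts in a single line, and the existence of the rigid partners that you flag as the delicate step is exactly the point the paper dismisses with ``these are not hard to find.''
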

\begin{proof} We first prove ``only if.''  In condition \eqref{cond3}
  of the definition of a morphism, $\ell_{\phi(e)}$ is determined by
  $\ell_e$ since the vertical arrows are isomorphisms (but $\ell_e$
  may not be determined by $\ell_{\phi(e)}$, since the indeterminacy
  $F_{\phi(e)}$ of $\ell_{\phi(e)}$ may be greater than the
  indeterminacy $F_e$ of $\ell_e$).  Thus for any \NAH-graph there is an
  outgoing morphism to a new \NAH-graph for which the underlying graph
  homomorphism is an isomorphism and every orbifold vertex label is
  simply replaced in the new graph by the corresponding minimal
  orbifold; the new edge labels are then as just described.

  For the converse, if $N$ is a minimal orbifold, any \NAH-graph which
  is star-shaped, with $N$ labelling the middle vertex and with
  one-cusp non-arithmetic commensurator quotients labelling the outer
  ones, is minimal.  One needs a one-cusp commensurator quotient for
  each cusp degree in $\{1,2,3,4,6\}$ for this construction; these are
  not hard to find.
\end{proof}
\section{Minimal \NAH-graphs classify for quasi-isometry} 
\label{sec:nah graphs}

Let $M=M^3$ be a \NAH-manifold. For simplicity of exposition we 
first discuss the case that $M$ has no arithmetic pieces, and then
discuss the modifications needed when arithmetic pieces also occur.

$M$ is then pasted together from pieces $M_i$, each of which is a
neutered non-arithmetic hyperbolic manifold. We can choose neuterings
in a consistent way, by choosing once and for all a neutering for the
commensurator quotient in each commensurability class of
non-arithmetic manifolds, and choosing each $M_i$ to cover one of
these ``standard neutered commensurator quotients.''

The pasting identifies pairs of flat boundary tori with each other by
affine maps which may not be isometries. To obtain a smooth metric on
the result we glue a toral annulus $T^2\times I$ between the two
boundaries with a metric that interpolates between the flat metrics at
the two ends in a standard way (if $g_0$ and $g_1$ are the flat
product metrics induced on $T^2\times I$ by the flat metric on its
left and right ends we use $(1-\rho(t))g_0+\rho(t)g_1$ where
$\rho\colon [0,1]\to[0,1]$ is some fixed smooth bijection with derivatives
$0$ at each end). This specifies the metric on $M$ up to rigid
translations of the gluing maps, so we get a compact family of
different metrics on $M$.

The universal cover $\widetilde M$ is glued from infinitely many
copies of the $\widetilde M_i$'s with ``slabs'' $\R^2\times I$
interpolating between them. Each slab admits a full $\R^2$ of
isometric translations.  The pieces $\widetilde M_i$ will be called
``pieces'' and each boundary component of a piece will be called a
``flat'' and will be oriented as part of the boundary of the piece it
belongs to.

Let $M'$ be another such manifold and $\widetilde M'$ its universal
cover, metrized as above.  Kapovich and Leeb \cite{KapovichLeeb:haken} show that
any quasi-isometry $f\colon \widetilde M\to \widetilde M'$ is a
bounded distance from a quasi-isometry that maps slabs to slabs and geometric pieces to geometric pieces. Then
a theorem of Schwartz \cite{Schwartz:RankOne} says that $f$ is a
uniformly bounded distance from an isometry on each piece $\widetilde
M_i$. Our uniform choice of neuterings assures that we can change $f$
by a bounded amount to be an isometry on each piece and an isometry
followed by a shear map on each slab (a ``shear map'' $\R^2\times I\to
\R^2\times I$ will mean one of the form $(x,t)\mapsto (x+\rho(t)v,t)$ with
$v\in R^2$ and $\rho\colon I\to I$ as described earlier). We then say
$f$ is \emph{straightened}.

Consider now the group\footnote{Note that $\I(\widetilde M)\to
  \mathcal Q\mathcal I(\widetilde M)$ is an isomorphism, where  $
  \mathcal Q\mathcal I$ denotes the group of quasi-isometries (defined
  by identifying maps that differ by a bounded distance).}
$$
\I(\widetilde M):=\{f\colon \widetilde M\to \widetilde M~|~ f\text{ is a
  straightened quasi-isometry}\}\,.
$$ 

$\I(\widetilde M)$ acts on the set of pieces of $\widetilde M$; 
the pieces $\widetilde M_i$ that cover a given piece $M_i$ of $M$ are
all in one orbit of this action, but the orbit may be larger. The
subgroup $\I_{\widetilde M_i}(\widetilde M)$ of $\I(\widetilde M)$
that stabilizes a fixed piece $\widetilde M_i$ of $\widetilde M$ acts
discretely on $\widetilde M_i$, so $\widetilde M_i/\I_{\widetilde
  M_i}(\widetilde M)$ is an orbifold (which clearly is covered by
$M_i$)

The subgroup $\I_S(\widetilde M)\subset \I(\widetilde M)$ that
stabilizes a slab $S\subset \widetilde M$ acts on $S$ by isometries
composed with shear maps. It acts discretely on each boundary
component of $S$ but certainly not on $S$. But an element that is a
finite order rotation on one boundary component must be a similar
rotation on the other boundary component (and is in fact finite order
on the slab; the group $\I_S(\widetilde M)$ is abstractly an extension
of $\Z^2\times\Z^2$ by a finite cyclic group $F_S$ of order $1$, $2$,
$3$, $4$, or $6$; the two $\Z^2$'s are the translation groups
for the two boundaries of $S$).

We form an \NAH-graph $\Omega(\widetilde M)$ as follows. The
vertices of $\Omega(\widetilde M)$ correspond to $\I(\widetilde M)$--orbits of
pieces and the edges correspond to orbits of slabs---the edge
determined by a slab connects the vertices determined by the abutting
pieces.  We label each vertex by the corresponding orbifold
$\widetilde M_i/\I_{\widetilde M_i}(\widetilde M)$ and each edge by
the derivative of the affine map between the flats that bound a
corresponding slab $S$ (this map is determined up to the cyclic group
$F_S$). 

\begin{proposition}\label{prop:th1}
  The \NAH-graph $\Omega(\widetilde M)$ constructed above is
  determined up to isomorphism by $\widetilde M$.  The manifold 
  $\widetilde M$ is determined up to bilipschitz diffeomorphism 
  by $\Omega(\widetilde M)$.
\end{proposition}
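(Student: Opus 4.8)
The plan is to treat the two assertions separately: first that the labelled graph $\Omega(\widetilde M)$ depends only on $\widetilde M$ and not on the auxiliary choices in its construction, and then that $\widetilde M$ can be rebuilt from $\Omega(\widetilde M)$ up to bilipschitz diffeomorphism.

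For the first assertion I would observe that the decomposition of $\widetilde M$ into pieces and slabs is canonical once the metric is fixed, being the lift of the geometric decomposition of $M$, and that the group $\I(\widetilde M)$ is intrinsic to $\widetilde M$: by the footnote remark it is isomorphic to $\mathcal Q\mathcal I(\widetilde M)$, while the Kapovich--Leeb and Schwartz results guarantee that each of its elements genuinely permutes the pieces and slabs (isometrically on pieces, by isometry-plus-shear on slabs). Hence the vertex set (orbits of pieces), the edge set (orbits of slabs), and their incidence are determined by $\widetilde M$. It then remains to check independence from the chosen orbit representatives: two pieces in one $\I(\widetilde M)$--orbit have conjugate stabilizers, so the quotient orbifolds $\widetilde M_i/\I_{\widetilde M_i}(\widetilde M)$ are isomorphic; likewise two slabs in one orbit carry the same affine gluing datum up to conjugacy, and since the edge label is the \emph{derivative} of the gluing map it is moreover insensitive to the translational ambiguity coming from the compact family of metrics. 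This settles the first statement.

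For the second assertion I would reconstruct a space $X$ from $\Omega(\widetilde M)$ and exhibit a bilipschitz diffeomorphism $X\to\widetilde M$. The key local fact is that each piece $\widetilde M_i$ over a vertex $v$ is simply connected and covers the label $N_v$, hence is isometric to the universal cover $\widetilde N_v$ of $N_v$, a neutered copy of $\H^3$ determined up to isometry by $N_v$ alone. Moreover the boundary flats of $\widetilde N_v$ are exactly the lifts of the cusps of $N_v$, so the edge of $\Omega(\widetilde M)$ recording a cusp $C_e$ dictates, via the covering $\widetilde N_v\to N_v$, a whole $\pi_1^{orb}(N_v)$--orbit of slabs to be attached along the corresponding flats. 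Developing this attaching rule from a base piece produces a tree of pieces joined by slabs (a tree since $\widetilde M$ is simply connected), and I would glue the pieces using any representatives of the edge-label cosets and any translation parts. Because $\widetilde M$ is itself assembled from copies of the $\widetilde N_v$ glued by the very same data modulo exactly these indeterminacies, the resulting $X$ is bilipschitz to $\widetilde M$: the choices only move the metric within the compact bilipschitz family and absorb the $F_e$--ambiguity into isometries of the adjacent pieces.

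The step I expect to be the main obstacle is the reconstruction half, in two respects. First, one must verify that $\Omega(\widetilde M)$ records enough about each vertex orbifold and its cusps so that the development reproduces the combinatorial tree of $\widetilde M$ exactly, rather than some other tree with the same quotient graph. Second, and more seriously, the accumulated translational and $F_e$ indeterminacies occur in infinitely many gluings, and these must be organized into a single \emph{globally} bilipschitz diffeomorphism with uniformly controlled constants, not merely a collection of piecewise bilipschitz maps whose distortions might degenerate along the tree. Controlling these constants uniformly, using the finiteness of $\Omega(\widetilde M)$ and the compactness of the family of metrics, is what upgrades the conclusion from quasi-isometric to bilipschitz.
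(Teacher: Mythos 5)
Your overall strategy coincides with the paper's: the first assertion is essentially true by construction (the paper dispenses with it in one sentence, and your elaboration via conjugate stabilizers and the derivative label is fine), and the second assertion is approached, as in the paper, by rebuilding a model space $X$ from $\Omega(\widetilde M)$ by gluing universal covers of the vertex orbifolds and slabs along a tree. The problem is that you stop precisely at the step that constitutes the actual proof. You assert that ``the choices only move the metric within the compact bilipschitz family,'' and then concede that organizing the infinitely many translational and $F_e$ indeterminacies into a single globally bilipschitz map is ``the main obstacle,'' naming finiteness and compactness as expected ingredients without giving the argument. As stated, your assertion is not true on its face: two admissible gluings along a flat can differ by an arbitrarily large translation, so naively matching them would require an arbitrarily large shear, and nothing you have written rules out the distortion growing without bound over the infinitely many slabs.

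The paper's resolution, which is missing from your proposal, is a specific inductive mechanism with two features. First, the comparison map $X\to\widetilde M$ is built to be a genuine \emph{isometry} on every piece, so no error propagates or compounds along the tree --- your worry that distortions ``might degenerate along the tree'' evaporates because all distortion is confined to the slabs. Second, at each extension across a slab $S$, the isometry of the next piece of $X$ onto the next piece of $\widetilde M$ is, on the shared boundary flat $E$, well defined up to the deck lattice $\Z^2$ acting on $E$; choosing the lattice translate that moves points the least, the shear required to extend across $S$ is bounded by the diameter of the torus $E/\Z^2$. Since $\Omega(\widetilde M)$ is finite, only finitely many isometry classes of such tori occur in the construction, so the shear is uniformly bounded over all slabs and the resulting diffeomorphism has a uniform bilipschitz constant. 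Note also that the relevant finiteness here is that of these boundary-torus isometry classes; the ``compact family of metrics'' on $M$ that you invoke is what the paper uses earlier to straighten quasi-isometries, not in this reconstruction. Your proposal correctly locates the obstacle but does not supply this lattice-translate-plus-bounded-shear argument, which is the mathematical content of the second half of the proposition.
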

\begin{proof} 
  The first sentence of the proposition is true by construction. 

  We describe how to reconstruct $\widetilde M$ from
  $\Omega(\widetilde M)$. $\Omega(\widetilde M)$ gives specifications
  for inductively gluing together pieces that are the universal covers
  of the orbifolds corresponding to its vertices, and slabs between
  these pieces, according to a tree: we start with a piece $\widetilde
  N$ corresponding to a vertex $v$ of $\Gamma$ and glue slabs and
  adjacent pieces on all boundary components as specified by the
  outgoing edges at $v$ in $\Omega(\widetilde M)$, and repeat this
  process for each adjacent piece, and continue inductively. (There is
  an underlying tree for this construction which is the universal
  cover of the graph obtained by replacing each edge of $\Gamma$ by a
  countable infinity of edges.) The construction involves choices,
  since each gluing map is only determined up to a group of isometries
  of the form $\R^2\rtimes F_S$, where $S$ is the slab. We need to
  show that the resulting manifold is well defined up to bilipschitz
  diffeomorphism.

  The constructed manifold $X$ and the original $\widetilde
  M$ can both be constructed in the same way from $\Omega(\widetilde
  M)$, but they potentially differ in the choices just mentioned. We
  can construct a bilipschitz diffeomorphism $f\colon X\to \widetilde
  M$ inductively, starting with an isometry from one piece of $X$ to
  one piece of $\widetilde M$ and extending repeatedly over adjacent
  pieces.  At any point in the induction, when extending to an
  adjacent piece across an adjacent slab $S$, we use an isometry of
  the adjacent piece $X_i$ of $X$ to the adjacent piece $M_i$ of
  $\widetilde M$ that takes the boundary component $X_i\cap S$ of
  $X_i$ to the boundary component $M_i\cap S$ of $M_i$.  Restricted to
  this boundary component $E$, this isometry is well defined up to the
  action of a lattice $\Z^2$, so there is a choice that can be
  extended across the slab $S$ with an amount of shear bounded by the
  diameter of the torus $E/\Z^2$.  Since only finitely many isometry
  classes of such tori occur in the construction, we can inductively
  construct the desired diffeomorphism using a uniformly bounded
  amount of shear on slabs.  This diffeomorphism therefore has a
  uniformly bounded bilipschitz constant, as desired.
\end{proof}

We now describe how the above arguments must be modified if $M$ has
arithmetic pieces.  Let $M_1$ be an arithmetic hyperbolic piece which
is adjacent to a non-arithmetic hyperbolic piece $M_2$. We have
already discussed how $M_2$ is neutered; we take an arbitrary
neutering of $M_1$ (and any other arithmetic pieces) which we will
adjust later. As before, the notation $M_i$ refers to the neutered
pieces and we glue $M$ from these pieces mediating with toral annuli
$T^2\times I$ between them.

We aim to show that we can adjust the neutering of the arithmetic
pieces so that any quasi-isometry of $\widetilde M$ can be
straightened as in the beginning of this section to be an isometry
on pieces and a shear map on slabs.

Consider $\widetilde M_1$ as a subset of $\H^3$ obtained by removing
interiors of infinitely many disjoint horoballs. Schwartz
\cite{Schwartz:RankOne} shows that any quasi-isometry of $\widetilde M_1$
is a bounded distance from an isometry of $\widetilde M_1$ to a
manifold obtained by changing the sizes of the removed horoballs by a
uniformly bounded amount.

In the universal cover $\widetilde M$ choose lifts $\widetilde M_1$
and $\widetilde M_2$ glued to the two sides $\partial_1S$ and
$\partial_2S$ of a slab $S\cong \R^2\times I$. Consider a
quasi-isometry $f$ of $\widetilde M$ which maps $S$ to a bounded
Hausdorff distance from itself. We can assume that $f$ is an isometry
on $\widetilde M_2$. By the previous remarks, the map $f$ restricted
to $\widetilde M_1$ is a bounded distance from an isometry of
$\widetilde M_1$ which moves its boundary component $\partial_1S$ to a
parallel horosphere (if we consider $\widetilde M_1$ as a subset of
$\H^3$); by inverting $f$ if necessary, we can assume the diameter of
the horosphere has not decreased.  Thus $\widetilde M_1$ can be
positioned in $\H^3=\{(z,y)\in \C\times\R:y>0\}$ so that $\partial_1S$
is the horosphere $y=1$ and $f(\partial_1S)$ is the horosphere
$y=\lambda$ for some $\lambda\le 1$. Using a smooth isotopy of $f$
which is supported in an
$\epsilon$--neighborhood of the region between $f(\partial_1S)$ and
$\partial_1S$, and which moves $f(\partial_1S)$ to $\partial_1S$,  we
can adjust $f$ to map $\partial_1S$ to itself. This moves each point
of $f(\partial_1S)$ to its closest point on $\partial _1 S$ by a
euclidean similarity, scaling distance uniformly by a factor of
$\lambda$.  The resulting adjusted $f$ is still a quasi-isometry, so 
restricted to $S$ we then have a quasi-isometry which scales metric on
$\partial_1S$ by $\lambda$ and is an isometry
on $\partial_2S$. This is only possible if $\lambda=1$, so $f$, once
straightened on $\widetilde M_1$, maps $\partial_1 S$ to itself.

Now consider the subgroup of the group of quasi-isometries of
$\widetilde M$ which takes $\widetilde M_1$ to itself, and just
consider its restriction to $\widetilde M_1$, which we can think of as
embedded in $\H^3$. By straightening, we have a group of isometries of
$\H^3$ which preserves a family of disjoint horoballs (the ones that
are bounded by images of $\partial_1S$). Any subgroup of $\Isom(\H^3)$
which preserves an infinite family of disjoint horoballs is discrete.
Thus, from the point of view of the construction above, $M_1$ behaves
like a non-arithmetic piece. By repeating the argument, this behavior
propagates to any adjacent arithmetic pieces, hence, so long as at
least one piece is non-arithmetic, the construction of the graph
$\Omega(\widetilde M)$ goes through as before and the proof of
Proposition extends.

\begin{proposition}\label{prop:minimal}
  $\Omega(\widetilde M)$ is balanced, and is the minimal \NAH-graph in
  the bisimilarity class of the \NAH-graph $\Gamma(M)$ associated with
  $M$ (Definition \ref{def:assoc int}).
\end{proposition}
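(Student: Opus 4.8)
The plan is to prove the two assertions separately, since ``balanced'' and ``minimal'' are logically distinct properties of $\Omega(\widetilde M)$. For balancedness, the key observation is that the associated integral \NAH-graph $\Gamma(M)$ is manifestly balanced: by Definition \ref{def:integral NAH}, an integral \NAH-graph has all $\ell_e$ integral, hence has $\det(\ell_e)=\pm1$ along every edge (the gluing maps between flat tori of the geometric pieces of a manifold are genuine isometries of the integral lattices, so $\delta_e=1$ for every edge), making the product around any cycle equal to $\pm1$. So first I would show that $\Omega(\widetilde M)$ lies in the bisimilarity class of $\Gamma(M)$; given that, Theorem \ref{th:balanced} immediately yields that $\Omega(\widetilde M)$ is balanced, since a morphism from the balanced graph $\Gamma(M)$ carries balancedness forward. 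This reduces both halves of the proposition to establishing the single fact that $\Omega(\widetilde M)$ is the minimal member of the bisimilarity class of $\Gamma(M)$.

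To identify $\Omega(\widetilde M)$ with the minimal graph, I would exhibit an explicit morphism $\Gamma(M)\to\Omega(\widetilde M)$ and then show that $\Omega(\widetilde M)$ admits no nontrivial outgoing morphism. The morphism $\Gamma(M)\to\Omega(\widetilde M)$ comes from the covering of orbifolds: each geometric piece $N_v$ of $M$ (a vertex label of $\Gamma(M)$) covers the quotient orbifold $\widetilde M_i/\I_{\widetilde M_i}(\widetilde M)$ that labels the corresponding vertex of $\Omega(\widetilde M)$, because the deck group $\pi_1(M_i)$ of $\widetilde M_i\to M_i$ sits inside $\I_{\widetilde M_i}(\widetilde M)$. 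On the graph level, the map sending each piece of $M$ to its $\I(\widetilde M)$--orbit of lifts induces a graph homomorphism, and the compatibility with edge labels (the commuting diagram of Definition \ref{def:morphism}\eqref{cond3}) holds because the derivative of the affine gluing map is preserved by the covering: the slab gluing data downstairs is the image of the gluing data upstairs. I would verify that the data $(\phi,\{\pi_v\},\text{edge compatibility})$ satisfies all three conditions of a morphism, paying attention to the $F_e$--indeterminacy allowed in item \eqref{NAH-graph(4)}.

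For minimality, I would argue that any morphism $\Omega(\widetilde M)\to\Gamma''$ must be an isomorphism. The intuition is that $\Omega(\widetilde M)$ is built directly from the quasi-isometry group $\I(\widetilde M)$, which already records every symmetry of $\widetilde M$; a proper morphism out of $\Omega(\widetilde M)$ would correspond to a further quotient, i.e., additional symmetries of $\widetilde M$ not already captured by $\I(\widetilde M)$, which is impossible. More concretely, a morphism $\Omega(\widetilde M)\to\Gamma''$ with some $\pi_v$ of degree $>1$ would mean the orbifold $\widetilde M_i/\I_{\widetilde M_i}(\widetilde M)$ properly covers a smaller orbifold compatible with the edge labels; pulling this covering back to $\widetilde M$ via Proposition \ref{prop:th1} (which reconstructs $\widetilde M$ up to bilipschitz diffeomorphism from $\Omega(\widetilde M)$) would produce a straightened quasi-isometry of $\widetilde M$ not lying in the group generated by the stabilizers already used to define $\Omega(\widetilde M)$, contradicting that $\I_{\widetilde M_i}(\widetilde M)$ is the \emph{full} stabilizer. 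The main obstacle I anticipate is precisely this last step: making rigorous the claim that no orbifold label of $\Omega(\widetilde M)$ can be further reduced, which amounts to showing that the quotient $\widetilde M_i/\I_{\widetilde M_i}(\widetilde M)$ by the full straightened-quasi-isometry stabilizer is genuinely maximal among quotients compatible with the global gluing pattern. This requires carefully relating the local stabilizer data to the global group $\I(\widetilde M)$ and ruling out ``new'' isometries that could appear after passing to a smaller orbifold, and it is here that the structure of the slab stabilizers $\I_S(\widetilde M)$ and the discreteness established in the arithmetic-piece discussion must be invoked to guarantee that the candidate lifted quasi-isometry is in fact straightened and hence lies in $\I(\widetilde M)$.
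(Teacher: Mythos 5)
Your treatment of balancedness and of the morphism $\Gamma(M)\to\Omega(\widetilde M)$ coincides with the paper's route: $\Gamma(M)$ is integral, hence balanced (a small quibble: the gluing maps are lattice \emph{isomorphisms}, not isometries, but $\det=\pm1$ is still immediate), and Theorem \ref{th:balanced} pushes balancedness forward along the morphism induced by the coverings $M_i\to\widetilde M_i/\I_{\widetilde M_i}(\widetilde M)$. The genuine gap is in your minimality argument, and it sits exactly where you flag it. Given a morphism $\Omega(\widetilde M)\to\Gamma''$ with some $\pi_v$ of degree $>1$, the extra deck transformations of the covering $\widetilde M_i\to N''_{\phi(v)}$ are a priori only symmetries of the single piece $\widetilde M_i$; to contradict fullness of the stabilizer $\I_{\widetilde M_i}(\widetilde M)$ you must extend such a symmetry to a straightened quasi-isometry of all of $\widetilde M$, i.e., propagate it across infinitely many slabs and pieces consistently with the global gluing pattern. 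The morphism conditions of Definition \ref{def:morphism} are purely local (one commuting diagram per edge), and nothing in your sketch produces this global extension; the tools you invoke (discreteness of the slab stabilizers $\I_S(\widetilde M)$, the arithmetic-piece straightening) control the group $\I(\widetilde M)$ once a global map exists, but say nothing about whether the local symmetry globalizes. So as written the contradiction is not derived.

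The paper fills precisely this hole with the $X(\Gamma)$ device, proving Theorem \ref{th:minimal} en route --- which your proposal needs in any case, since the phrase ``the minimal \NAH-graph in the bisimilarity class'' presupposes uniqueness, and you never address it. For any \NAH-graph $\Gamma$, the construction in the proof of Proposition \ref{prop:th1} yields a manifold $X(\Gamma)$ well defined up to bilipschitz diffeomorphism, and the bounded-shear inductive extension in that proof is exactly what shows that $\I(X(\Gamma))$, restricted to a piece $\widetilde N_i$, contains the full deck group of $\widetilde N_i\to N_i$ --- the globalization step your argument lacks. This gives a morphism $\Gamma\to\Omega(X(\Gamma))$; moreover a morphism $\Gamma\to\Gamma'$ makes the assembly instructions equivalent, so $X(\Gamma)$ and $X(\Gamma')$ are bilipschitz diffeomorphic and $m(\Gamma):=\Omega(X(\Gamma))$ is constant on the bisimilarity class and receives a morphism from every member, hence is the unique minimal element. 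Since $X(\Gamma(M))$ reconstructs $\widetilde M$, one concludes $\Omega(\widetilde M)=m(\Gamma(M))$ without ever ruling out outgoing morphisms by hand. If you want to salvage your direct route, the repair uses the same ingredient: from $\Omega(\widetilde M)\to\Gamma''$ deduce $\widetilde M\cong X(\Gamma'')$, inside which the deck transformations of $\widetilde M_i\to N''_{\phi(v)}$ visibly extend to straightened quasi-isometries by the inductive construction, forcing $\pi_v$ to have degree $1$.
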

\begin{proof}
  By construction, there is a morphism $\Gamma(M)\to \Omega(\widetilde
  M)$.  In particular, $\Omega(\widetilde M)$ is in the bisimilarity
  class of $\Gamma(M)$. It is balanced by Theorem \ref{th:balanced},
  since $\Gamma(M)$ is integral. It remains to show that it is the
  minimal \NAH-graph in its class.

We have not yet proved Theorem \ref{th:minimal}, which says that there
is a unique minimal graph in each bisimilarity class. The proof that
$\Omega(\widetilde M)$ is minimal will follows from that proof, 
so we do that first.
\begin{proof}[Proof of Theorem \ref{th:minimal}]
  The construction of the proof of Proposition \ref{prop:th1} works
  for any \NAH-graph $\Gamma$, gluing together infinitely may copies
  of the universal covers $\widetilde N_i$ of the orbifolds that label
  the vertices, with slabs between them, according to an infinite tree
  (the universal cover of the graph obtained by replacing each edge of
  $\Gamma$ by countable-infinitely many). We get a simply connected
  riemannian manifold $X(\Gamma)$ which, as long as $\Gamma$ is 
  finite, is well defined up to
  bilipschitz diffeomorphism by the same argument as before. 

  The group of straightened self-diffeomorphisms $\I(X(\Gamma))$, when
  restricted to a piece $\widetilde N_i$, includes the covering
  transformations for the covering $\widetilde N_i\to N_i$. It follows
  that the construction of a \NAH-graph from $X(\Gamma)$, as given in 
  the first part of this section, yields an \NAH-graph
  $\Omega(X(\Gamma))$ together with a morphism $\Gamma\to
  \Omega(X(\Gamma))$.

  If $\Gamma\to\Gamma'$ is a morphism of \NAH-graphs, then $X(\Gamma)$
  and $X(\Gamma')$ are bilipschitz diffeomorphic, since the
  instructions for assembling them are equivalent.  Hence
  $\Omega(X(\Gamma))=\Omega(X(\Gamma'))$; call this graph $m(\Gamma)$.
  Since $m(\Gamma)=m(\Gamma')$ and the existence of a morphism
  generates the relation of bisimilarity of \NAH-graphs, $m(\Gamma)$ is
  the same for every graph in the bisimilarity class. It is also the
  target of a morphism from every $\Gamma$ in this class. Thus it must
  be the unique minimal element in the class, so Theorem
  \ref{th:minimal} is proved.  \end{proof}

If $M$ is an \NAH-orbifold and $\Gamma=\Gamma(M)$ its associated
integral \NAH-graph, then $X(\Gamma)$ reconstructs $\widetilde M$, so
$\Omega(\widetilde M)=\Omega(X(\Gamma))=m(\Gamma)$, and is hence
minimal by the previous proof, so Proposition \ref{prop:minimal} now
follows.
\end{proof}

\begin{proof}[Proof of Classification Theorem 
    for \NAH-manifolds]
Since $\widetilde M$ is quasi-isometric to $\pi_1(M)$, a
quasi-isometry between fundamental groups of $M$ and $M'$ induces a
quasi-isometry $\widetilde M\to \widetilde M'$. We have already
explained how this can then be straightened and thus give an
isomorphism of the corresponding minimal \NAH-graphs.
\end{proof}
For the Classification Theorem 
to be a complete classification of
quasi-isometry types of fundamental groups of good manifolds we will
need to know that every minimal balanced \NAH-graph is realized by an
\NAH-manifold. We formulate this as a conjecture (the Realization 
Theorem 
says this conjecture follows from the the cusp covering conjecture
CCC$_3$).
\begin{conjecture}
  Every minimal balanced \NAH-graph is the minimal \NAH-graph for some
  \NAH-manifold. Equivalently, every bisimilarity class of \NAH-graphs
  which contains a balanced \NAH-graph contains integral \NAH-graphs
  (Definition \ref{def:integral NAH}).
\end{conjecture}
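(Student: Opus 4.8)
The plan is to prove that the two displayed formulations are \emph{equivalent} unconditionally, and then to deduce the substantive one (hence both) from $\mathrm{CCC}_3$, which is where all of the geometric difficulty sits.

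For the equivalence, by Theorem \ref{th:minimal} a bisimilarity class is the same datum as its unique minimal member, and by Theorem \ref{th:balanced} a class contains a balanced \NAH-graph exactly when its minimal member $\Gamma_0$ is balanced; so ``minimal balanced \NAH-graph'' and ``balanced bisimilarity class'' carry the same information. If $\Gamma_0$ is the minimal graph of an \NAH-manifold $M$, then Proposition \ref{prop:minimal} identifies $\Gamma_0=\Omega(\widetilde M)$ with $m(\Gamma(M))$, and $\Gamma(M)$ is an \emph{integral} graph in the class (Definition \ref{def:assoc int}), giving one direction. Conversely, given an integral $\Gamma$ in the class, Definition \ref{def:assoc int} produces an associated \NAH-orbifold $M_\Gamma$ with $\Gamma(M_\Gamma)=\Gamma$; I would pass to a finite manifold cover $M\to M_\Gamma$ (Selberg's lemma makes the orbifold group virtually torsion free, and the cover respects the torus decomposition). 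A covering of \NAH-orbifolds induces a morphism $\Gamma(M)\to\Gamma$ in the sense of Definition \ref{def:morphism}, so $\Gamma(M)$ is integral and bisimilar to $\Gamma$; then $\Omega(\widetilde M)=m(\Gamma(M))=m(\Gamma)=\Gamma_0$ by Proposition \ref{prop:minimal}, exhibiting $\Gamma_0$ as the minimal graph of the \NAH-manifold $M$ (covers of hyperbolic pieces are hyperbolic, arithmeticity is a commensurability invariant, and the gluing tori survive, so $M$ is genuinely \NAH). Thus it suffices to produce, in the class of any minimal balanced $\Gamma_0$, an integral \NAH-graph.

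To build such a graph I would realize a morphism $\Gamma\to\Gamma_0$ whose vertex orbifolds are finite covers of the (minimal) orbifolds $N_v$ labelling $\Gamma_0$. A cover $N'\to N_v$ replaces each cusp lattice $L_e\subset TC_e$ by the finite-index sublattices cut out by the cusps of $N'$ lying over it, and the commutativity condition \eqref{cond3} forces the new edge label to be the old $\ell_e$ read on these sublattices; integrality of $\Gamma$ means precisely that each such $\ell_e$ carries the chosen sublattice on one end \emph{onto} the chosen sublattice on the other. It is convenient to split this data into the \emph{shape} of each cusp (the homothety class of its sublattice) and its \emph{size} (the covolume, equivalently the cusp-covering degree $d_e$). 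The size constraint from matching $\ell_e$ across an edge is, in the notation of \eqref{eq:bal} with the new determinants set equal to $1$, just $d_e=\delta_e\,d_{\bar e}$. Here balancedness does its work: exactly as in the bookkeeping (the invariants $c(v)$ and $\zeta$) of the proof of Theorem \ref{th:balanced}, the relation $d_e/d_{\bar e}=\delta_e$ is solved by prescribing values along a spanning tree and propagating, and the remaining cycle conditions are the assertions $\prod\delta_e=1$, which hold precisely because $\Gamma_0$ is balanced. Clearing denominators gives positive \emph{integer} target degrees at every cusp, consistent around every loop.

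The hard part is the geometric realization, and it is exactly $\mathrm{CCC}_3$. Having prescribed at each vertex $v$ a finite-index sublattice on every cusp of $N_v$ (a target shape, scaled to the target covolume $d_e$), I must find a \emph{single} finite cover $N'\to N_v$ whose cusps realize these sublattices; doing this simultaneously at all vertices, with the per-edge images matched as above, assembles the integral graph $\Gamma$. Prescribing cusp \emph{sizes} is governed by the (quantized, loop-constrained) covering degrees and is handled by balancedness as above; prescribing cusp \emph{shapes} by a cover is a subgroup-separability statement about the peripheral structure, and this is the content of the Cusp Covering Conjecture (Conjecture \ref{cuspcoveringconj}), discussed in Remark \ref{remark:RFCH}. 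This is why the statement is only a conjecture here, and why the Realization Theorem is stated conditionally: the linear algebra and the determinant bookkeeping are routine given balancedness, but without $\mathrm{CCC}_3$ one cannot guarantee covers of the hyperbolic orbifolds with the required cusp shapes. Assuming $\mathrm{CCC}_3$, the construction closes up and produces the desired integral \NAH-graph, completing the proof.
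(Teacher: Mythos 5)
Your overall architecture coincides with the paper's: the statement is only proved there conditionally (it is the content of the Realization Theorem), by using CCC$_3$ through Property~\ref{prop:good cover} to choose, for each edge $e$, a \emph{single} sublattice $\Lambda'_e\subset\Lambda_e$ serving both ends of $e$ at once---so that the new edge labels become $\Z$--isomorphisms---and then using balancedness to make the resulting covers glueable into an integral \NAH-graph. Your unconditional reduction to ``the bisimilarity class contains an integral graph,'' including the Selberg-lemma passage from the associated orbifold of Definition~\ref{def:assoc int} to a manifold cover, matches the paper's (terser) treatment and is correct.

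However, there is a genuine gap in your assembly step, and you invoke balancedness in the wrong place. For \emph{any} common sublattice $\Lambda'_e\subset\Lambda_e$, the cusp-covering degrees at the two ends automatically satisfy $d_e\delta_e=d_{\bar e}$, equation \eqref{eq:degree}: this is a covolume computation, so there is nothing to ``solve by prescribing values along a spanning tree,'' and no cycle condition arises at this stage (the degrees $d_e$ at distinct cusps of one vertex are unconstrained by one another). The real obstruction, which your proposal never addresses, is that after CCC$_3$ hands you a cover $M_v\to N_v$ of some total degree $d_v$---which you do \emph{not} get to choose---the number of boundary components of $M_v$ lying over the cusp $C_e$ is $d_v/d_e$, and these counts must agree across every edge before any gluing can ``assemble'' a graph; in general they do not, and prescribing ``target covolumes'' cannot force this, since CCC$_3$ controls cusp shapes but not $d_v$. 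This is exactly where balancedness enters in the paper's proof: it guarantees positive rationals $m(v)$ with $m(\tail e)=\delta_e m(\head e)$ (here is the spanning-tree propagation, with cycle conditions $\prod\delta_e=1$), whence $m(\head e)/d_e=m(\tail e)/d_{\bar e}$ as in \eqref{eq:cc}; one then clears denominators with an integer $b$ and replaces each $M_v$ by the disjoint union $M'_v$ of $n(v)=bm(v)/d_v$ copies, so that both sides of every edge present $bm(v)/d_e$ boundary tori, all of the common type $T_e/\Lambda'_e$, which may be matched arbitrarily. Your spanning-tree remark shows you have the right mechanism in hand but apply it to the wrong quantity; without the multiplicity/disjoint-union step the construction does not close up. (A small slip besides: your relation $d_e=\delta_e\,d_{\bar e}$ inverts \eqref{eq:degree}.)
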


\section{Covers and RFCH}\label{sec:ccc}

For our realization theorem, and also for our commensurability
theorem, we need to produce covers of $3$--manifolds with prescribed
boundary behavior. The covers we need can be summarized by the
following purely topological conjecture, which we find is of
independent interest.
\begin{ccconjecture}[CCC$_n$]\label{cuspcoveringconj} Let $M$ be a hyperbolic
  $n$-manifold.  Then for each cusp $C$ of $M$ there exists a
  sublattice $\Lambda_C$ of $\pi_1(C)$ such that, for any choice of a
  sublattice $\Lambda'_C\subset \Lambda_C$ for each $C$, there exists
  a finite cover $M'$ of $M$ whose cusps covering each cusp $C$ of $M$
  are the covers determined by $\Lambda'_C$.
\end{ccconjecture}

As we rely on this conjecture for $n=3$, we shall now show that it
follows from the well-known residual finiteness conjecture for
hyperbolic groups (RFCH).

\begin{theorem}\label{RFCHimpliescovers} The residual finiteness
  conjecture for hyperbolic groups (RFCH) implies 
CCC$_n$ for all $n$.
\end{theorem}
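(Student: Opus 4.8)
The plan is to convert the relatively hyperbolic group $G:=\pi_1(M)$ into an honest word-hyperbolic group by Dehn filling along the prescribed sublattices, apply RFCH to the filled group, and pull the resulting finite quotient back to produce the desired cover. Recall that for a finite-volume hyperbolic $n$-manifold $M$ the group $G$ is hyperbolic relative to the cusp subgroups $P_C:=\pi_1(C)$, each of which is virtually $\Z^{n-1}$ (for orientable $3$--manifolds simply $P_C\cong\Z^2$). Throughout I would use the standard correspondence between finite covers $M'\to M$ and finite-index subgroups $H\le G$: the cusps of $M'$ lying over a cusp $C$ are indexed by the double cosets $H\backslash G/P_C$, and the cusp indexed by $HgP_C$ covers $C$ by the covering determined by the sublattice $g^{-1}Hg\cap P_C\le P_C$.

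First I would invoke the relatively hyperbolic Dehn filling theorem of Osin and of Groves--Manning. This produces, for each cusp $C$, a finite subset $F_C\subset P_C\smallsetminus\{1\}$ with the following property: for any choice of finite-index (hence, in the abelian case, normal) subgroups $N_C\trianglelefteq P_C$ with $N_C\cap F_C=\emptyset$, the filled group
\[
\bar G:=G\big/\big\langle\!\big\langle \textstyle\bigcup_C N_C\big\rangle\!\big\rangle
\]
is hyperbolic relative to the quotients $P_C/N_C$, the natural map $P_C\to\bar G$ has kernel exactly $N_C$, and distinct peripherals embed without extra identifications. Using residual finiteness of the abelian group $P_C$ I would now choose, once and for all, a finite-index sublattice $\Lambda_C\le P_C$ avoiding the finite set $F_C$; since $F_C$ is finite this is possible, and crucially any further sublattice $\Lambda'_C\subseteq\Lambda_C$ also satisfies $\Lambda'_C\cap F_C=\emptyset$. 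This choice of $\Lambda_C$ is exactly the one demanded by the statement of CCC$_n$.

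Given arbitrary sublattices $\Lambda'_C\subseteq\Lambda_C$, I set $N_C:=\Lambda'_C$ and form $\bar G$ as above. Because each $P_C/\Lambda'_C$ is \emph{finite}, $\bar G$ is hyperbolic relative to finite subgroups and is therefore word-hyperbolic. At this point RFCH applies: $\bar G$ is residually finite. The images $\bar P_C:=P_C/\Lambda'_C$ contribute only finitely many nontrivial elements in total (finitely many cusps, each with finite $\bar P_C$), so by separating each of these finitely many elements from the identity and intersecting the resulting finite-index kernels I obtain a single finite quotient $\phi\colon\bar G\to Q$ whose restriction to every $\bar P_C$ is injective. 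Let $H\le G$ be the (normal, finite-index) kernel of the composite $G\to\bar G\xrightarrow{\phi}Q$, and let $M'\to M$ be the corresponding finite cover. Since $P_C\to\bar G$ has kernel $\Lambda'_C$ and $\bar P_C\hookrightarrow Q$, the kernel of $P_C\to Q$ is exactly $\Lambda'_C$, i.e. $H\cap P_C=\Lambda'_C$; normality of $H$ then gives $H\cap gP_Cg^{-1}=g\Lambda'_Cg^{-1}$ for every $g$, so every cusp of $M'$ over $C$ realizes precisely the covering determined by $\Lambda'_C$. This is the conclusion of CCC$_n$.

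The main obstacle is the correct and sufficiently strong application of relatively hyperbolic Dehn filling: I need it in all dimensions, with fillings by finite-\emph{index} subgroups (so that the filled peripherals are finite and $\bar G$ becomes genuinely word-hyperbolic rather than merely relatively hyperbolic), and I need the ``clean'' conclusion that $P_C\to\bar G$ has kernel exactly $N_C$ with no accidental identifications among peripherals---this is what guarantees $H\cap P_C=\Lambda'_C$ on the nose rather than merely up to finite index. A secondary point requiring care is that for $n>3$ the cusp group $P_C$ may be a nonabelian Bieberbach group rather than a lattice; there one must choose the $N_C$ inside the maximal normal translation lattice (or arrange $\Lambda'_C$ normal in $P_C$) so that the filling and the peripheral-injectivity argument go through, but the orientable $3$--dimensional case we actually use has $P_C\cong\Z^2$ and is free of this complication.
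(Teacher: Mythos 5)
Your proposal is correct and follows essentially the same route as the paper's proof: apply the Osin/Groves--Manning relatively hyperbolic Dehn filling theorem to kill the lattices $\Lambda'_C$, observe the filled group is hyperbolic since its peripherals are finite, invoke RFCH to find a finite quotient injective on the peripheral images, and take the kernel of the composite $\pi_1(M)\to G\to H$ as the covering subgroup. Your added details (the double-coset bookkeeping showing \emph{every} cusp over $C$ gets the right cover, and the caveat about nonabelian Bieberbach cusp groups when $n>3$) are points the paper leaves implicit, and the normality caveat is in fact a more careful treatment than the paper's.
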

\begin{proof}
  The Dehn surgery theorem for relatively hyperbolic groups of Osin
  \cite{osin} and Groves and Manning \cite{groves-manning} guarantees
  the existence of sublattices $\Lambda_C$ of $\pi_1(C)$ for each cusp
  so that, given any subgroups $\Lambda'_C\subset \Lambda_C$
  for each $C$, the result of adding relations to $\pi_1(M)$ which
  kill each $\Lambda'_C$ gives a group $G$ into which the groups
  $\pi_1(C)/\Lambda'_C$ inject and which is relatively hyperbolic
  relative to these subgroups. If the $\Lambda'_C$ are sub-lattices,
  then $G$ is relatively hyperbolic relative to finite (hence
  hyperbolic) subgroups, and is hence itself hyperbolic. By RFCH we
  may assume $G$ is residually finite, so there is a homomorphism of
  $G$ to a finite group $H$ such that each of the finite subgroups
  $\pi_1(C)/\Lambda'_C$ of $G$ injects. The kernel $K$ of the
  composite homomorphism $\pi_1(M)\to G\to H$ thus intersects each
  $\pi_1(C)$ in the subgroup $\Lambda'_C$. The covering of $M$
  determined by $K$ therefore has the desired property.
\end{proof}

Let $\Gamma$ be a \NAH-graph. 
For an edge $e$ of $\Gamma$ let $T_e$ be the tangent space of the
cusp orbifold corresponding to the start of $e$. We can identify
$T_e$ with $T_{\bar e}$ using the linear map $\ell_e$, so we will
generally not distinguish $T_e$ and $T_{\bar e}$.  Then $T_e$
contains two $\Z$--lattices, the underlying lattices for the
orbifolds at the two ends of $e$, and we will denote their
intersection by $\Lambda_e$.  Thus, a torus $T_e/\Lambda$ is a common
cover of the cusp orbifolds at the two ends of $e$ if and only if
$\Lambda\subset \Lambda_e$.

Assuming CCC$_3$, we can now 
choose a sublattice $\Lambda'_e$ of $\Lambda_e$ for each edge
$e$ of $\Gamma$ such that for each vertex $v$ of $\Gamma$ the
corresponding orbifold $N_v$ has a cover $M_v$ with the following
property:
\begin{property}\label{prop:good cover}
  For each cusp of $N_v$ corresponding to an edge $e$ departing $v$,
  all cusps of $M_v$ which cover it are of type $T_e/\Lambda'_e$.
\end{property}

This property is precisely the consequence of the CCC$_3$ that 
we use in our proofs of the Realization and Commensurability 
Theorems.

\begin{remark}\label{remark:RFCH}
  It clearly is desirable to avoid using CCC$_3$, but the current
  state of the art in constructing covers of a hyperbolic manifold $M$
  which restrict to prescribed covers on the cusps does not go far
  enough.  For example, Hempel observed in \cite{hempel} that one can
  do this for characteristic covers of the cusps of degrees avoiding a
  finite set of primes.  A characteristic cover is one given by a
  sublattice of the form $q\pi_1(C)$ for some $q>0$.  This is much too
  restrictive (a cover of the cusps corresponding to ends of an edge
  $e$ in an \NAH-graph can be characteristic for both cusps only if
  map $\ell_e$ is a rational multiple of a $\Z$--isomorphism).  By
  filling a cusp by Dehn surgery and then applying results of
  E.~Hamilton \cite{hamilton} to try to prescribe covers of the
  resulting geodesic one can get a little closer, but still far from
  what is needed.
\end{remark}

\section{Realizing graphs}\label{sec:realizingNAH}
\begin{proof}
  [Proof of Realizability Theorem 
  for \NAH-manifolds] Let $\Gamma$ be a balanced
  \NAH-graph. We want to show there is some \NAH-manifold which
  realizes a graph in its bisimilarity class. As pointed out in the
  previous section, this is equivalent to finding an integral
  \NAH-graph in the bisimilarity class.

  Since we assume CCC$_3$, after we choose a sublattice 
  $\Lambda'_e$ of $\Lambda_e$ for each edge $e$ of $\Gamma$, 
  we may assume Property~\ref{prop:good cover} holds.

  Let $d_v$ be the degree of the cover $M_v\to N_v$.  For an edge $e$
  departing $v$ let $d_e$ be the degree of the corresponding cover of
  cusp orbifolds of $M_v$ and $N_v$, i.e., the index of the lattice
  $\Lambda'_e$ in the fundamental group of the boundary component
  $C_e$ of $N_v$ corresponding to $e$ (this is slightly different from
  the usage in the proof of Theorem \ref{th:balanced}).  Since the
  cusps of the $M_v$ corresponding to the two ends of $e$ are equal,
  we have
  \begin{equation}
    \label{eq:degree}
    d_e\delta_e=d_{\bar e}\,.
  \end{equation}
  
  (Recall that
  $\delta_e$ denotes the determinant of the linear map $\ell_e$.)

  Since $\Gamma$ is balanced, we can assign a positive rational number
  $m(v)$ to each vertex with the property that for any edge $e$ one
  has $m(\tail{e})=\delta_em(\head{e})$. Thus, by \eqref{eq:degree},
  \begin{equation}
    \label{eq:cc}
    \frac{m(\tail{e})}{d_{\bar e}}=\frac{m(\head{e})}{d_e}\,.
  \end{equation}
  Choose a positive integer $b$ such that $n(v):=\frac{bm(v)}{d_v}$ is
  integral for every vertex of $\Gamma$ (so $b$ is some multiple of
  the lcm of the denominators of the numbers $\frac{m(v)}{d_v}$). Let
  $M'_v$ be the disjoint union of $n(v)$ copies of $M_v$, so $M'_v$ is
  a $bm(v)$--fold cover of $N_v$. Let $\pi_v\colon M'_v\to N_v$ be the
  covering map.

  For an edge $e$ of $\Gamma$ from $v=\head{e}$ to $w=\tail e$, the
  number of boundary components of $M'_v$ covering the boundary
  component $C_e$ of $N_v$ corresponding to $e$ is $bm(v)/d_e$. By
  \eqref{eq:cc} this equals $bm(w)/d_{\bar e}$, which is the number of
  boundary components of $M'_w$ covering the boundary component
  $C_{\bar e}$ of $N_w$.  Thus $\pi_v^{-1}C_e$ and $\pi_w^{-1}C_{\bar
    e}$ have the same number of components, and each component is
  $T_e/\Lambda'$, so we can glue $M'_v$ to $M'_w$ along these boundary
  components using any one-one matching between them.  Doing this for
  every edge gives a manifold $M$ whose \NAH-graph has a morphism to
  $\Gamma$; if $M$ is disconnected, replace it by a component (but one
  can always do the construction so that $M$ is connected). This 
  proves the theorem.
\end{proof}

\section{Commensurability}
\begin{proof}[Proof of Commensurability Theorem] 
  Let $M_1$ and $M_2$ be two \NAH-manifolds whose \NAH-graphs are
  bisimilar. Assume that their common minimal \NAH-graph is a tree and
  all the vertex labels are manifolds. We want to show 
  that $M_1$ and $M_2$ are commensurable.

  Let $\Gamma$ be the common minimal \NAH-graph for $M_1$ and $M_2$. 
  Since we assume CCC$_3$, we
  may assume Property~\ref{prop:good cover} holds. 
  Accordingly, for each vertex of $\Gamma$ we may take a common cover 
  $N'_v$ of all the pieces of $M_1$ that cover $N_v$ and then 
  choose the lattices
  $\Lambda'_e$, as in the previous section, to be subgroups of the cusp
  groups of these manifolds $N'_v$. In this way we arrange that the
  pieces $M_v$ of the manifold $M$ constructed in that section are
  covers of $N'_v$, and hence of the pieces of $M_1$. Elementary
  arithmetic as in the proof of the Realization Theorem 
  shows that there is
  a $b_0$ so that if the number $b$ of that proof is a multiple of
  $b_0$ then we can choose the glueing in that proof to make $M$ a
  covering space of $M_1$. Call the resulting manifold $M'_1$. If we
  initially choose $N'_v$ to also cover the type $v$ pieces of $M_2$
  then we can also construct a covering space $M'_2$ of $M_2$ out of
  copies of pieces $M_v$.  The decompositions of $M'_1$ and $M'_2$
  then give \NAH-graphs whose vertices are labelled by $M_v$'s and
  whose edges are labelled by $\Z$--isomorphisms. It suffices to show
  that $M'_1$ and $M'_2$ are commensurable.

  We can encode the information needed to construct $M'_1$ in a
  simplified version $\Gamma_0(M'_1)$ of its \NAH-graph. The
  underlying graph is still just the graph describing the
  decomposition of $M'_1$ into pieces, but the labelling is simplified
  as follows. Each vertex $w$ of $\Gamma_0(M'_1)$ corresponds to a
  copy of some $M_v$, which is a normal covering of the orbifold
  $N_v$. We say vertex $w$ is of \emph{type $v$}. The covering
  transformation group for the covering $M_v\to N_v$ induces a
  permutation group $P_v$ on the edges of $\Gamma_0(M'_1)$ exiting
  $w$.  We record the type and permutation action for each vertex of
  $\Gamma_0(M'_1)$.  It is easy to see that the graph $\Gamma_0(M'_1)$
  with these data records enough information to reconstruct $M'_1$ up
  to diffeomorphism from its pieces. A covering of such a graph
  induces a covering of the same degree for the manifolds they encode.

  A graph with fixed permutation actions at vertices as above is
  called \emph{symmetry restricted} in \cite{neumann}. The graphs
  $\Gamma_0(M'_1)$ and $\Gamma_0(M'_2)$ have the same universal
  covering as symmetry restricted graphs.

  So we would like to know that when two such graphs have a common
  universal covering, then they have a common finite covering.  A
  generalization, proved in \cite{neumann} of Leighton's theorem
  \cite{leighton} (which deals with graphs without the extra
  structure) shows that this is true if the underlying graph is a
  tree, so we are done. 
\end{proof}
The results of \cite{neumann} allow one to carry out the above proof
under slightly weaker assumptions on the minimal graph than being a
tree, but given the other strong assumption (CCC$_3$) used in the proof,
it does not seem worth going into details.

\section{Adding Seifert fibered pieces}\label{sec:seif}

We will  modify Definition~\ref{def:NAH-graph} to allow the inclusion of
Seifert fibered space pieces. The graphs we use are called H--graphs,
and we define them below.

We will need to consider Seifert fibered orbifolds among the pieces,
so we first describe a coarse classification into types. As always,
the manifolds and orbifolds we consider are oriented.  We will
distinguish two types: the oriented Seifert fibered orbifold $N$ is
type ``o'' or ``n'' according as the Seifert fibers can be
consistently oriented or not.
$N$ is type ``o'' if and only if the base orbifold $S$ of the Seifert
fibration is orientable. This can fail in two ways: the topological
surface underlying $S$ may be non-orientable, or $S$ may be
non-orientable because it has mirrors. The latter arises when parts of
$N$ look locally like a Seifert fibered solid torus $D^2\times S^1$
factored by the involution $(z_1,z_2)\mapsto (\overline z_1,\overline
z_2)$ (using coordinates in $\C^2$ with $|z_1|\le 1$ and $|z_2|=1$).
The fibers with $z_1\in \R$ in this local description are intervals
(orbifolds of the form $S^1/(\Z/2)$), and the set of base points of
such interval fibers of $N$ form mirror curves in $S$ which are intervals
and/or circles embedded in the topological boundary of $S$. If any of
these mirror curves are intervals, so they merge with part of the
true boundary of $S$ (image of boundary of $N$), then the
corresponding boundary component of $N$ is a pillow orbifold
(topologically a $2$--sphere, with four $2$--orbifold points).

If two type ``o'' Seifert fibered pieces  in the decomposition of
$M$ are adjacent along a torus, and we have oriented their Seifert
fibers, then there is a sense in which these orientations are
compatible or not. We say the orientation is \emph{compatible} or
\emph{positive} if, when viewing the torus from one side, the
intersection number in the torus of a fiber from the near side with a
fiber from the far side is positive. Note that this is well defined,
since if we view the torus from the other side, both its orientation
and the order of the two curves being intersected have changed, so the
intersection number is unchanged.

We define H--graphs below, with the geometric meanings of the new
ingredients in square brackets.  But there is a caveat to these
descriptions. Just an an \NAH--graph can be associated to the
geometric decomposition of an \NAH--manifold, an H--graph can be
associated with the decomposition of a good manifold or orbifold $M$
(see the Introduction for the definition of ``good''). However, the
\NAH--graph associated with a geometric decomposition is a special
kind of \NAH--graph (it is ``integral'' in the terminology of Section
\ref{sec:NAHgraph}), and an H--graph coming from the geometric
decomposition of a good manifold is similarly special. The geometric
explanations therefore only match precisely for special cases of
H--graphs.

\begin{definition}\label{def:Hgraph}
  An \emph{H--graph} $\Gamma$ is a finite connected graph with decorations on
  its vertices and edges as follows:
  \begin{enumerate}
  \item Vertices are partitioned into two types: \emph{hyperbolic
      vertices} and \emph{Seifert vertices}. The full subgraphs of
    $\Gamma$ determined by the hyperbolic vertices, respectively the
    Seifert vertices, are called the
    \emph{hyperbolic subgraph}, respectively the \emph{Seifert subgraph}.
  \item The hyperbolic subgraph is labelled as in Definition
    \ref{def:NAH-graph}, so that each of its components is an
    \NAH--graph.  In particular, each hyperbolic vertex $v$ is
    labelled by a hyperbolic orbifold $N_v$ and there is a map
    $e\mapsto C_e$ from the set of directed edges $e$ exiting that
    vertex to the set of cusp orbifolds $C_e$ of $N_v$. This map is
    defined on the set of \emph{all} edges exiting $e$, not just the
    edges in the hyperbolic subgraph.  As before, it is injective,
    except that an edge which begins and ends at the same vertex may
    have $C_e=C_{\bar e}$.
  \item Each Seifert vertex is labelled by one of two colors, black or
    white [for an H--graph coming from a geometric decomposition this
    encodes whether the Seifert fibered piece in the geometric
    decomposition of $M$ contains boundary components of the ambient
    $3$--manifold or not, as in \cite{behrstock-neumann}]. 
It is also
    labelled by a Seifert fibration type ``o'' or ``n'', as described
    above. 
  \item For an edge $e$ starting at a Seifert vertex the 
    group $F_e=F_{\overline e}$ is $\{1\}$ or $\{\pm 1\}$. If
    $F_e$ is $\{\pm 1\}$ then the Seifert
    vertex is type ``n''.
  \item\label{it:slope} For each edge $e$ from a hyperbolic vertex to
    a Seifert vertex the group $F_e$ associated to the cusp section
    $C_e$ is either trivial or $\{\pm1\}$. The edge is labelled by a
    non-zero rational vector in $TC_e$ called the \emph{slope} $s_e$,
    determined up to  the action of $F_e$.
    [$s_e$ encodes the direction and length of the fibers of
    the adjacent Seifert fibered piece.]
  \item\label{it:sign}Each edge $e$ connecting a type ``o'' Seifert
    vertex with a type ``o'' Seifert vertex or a hyperbolic vertex
    has a \emph{sign label} $\epsilon_e=\pm1$ with
    $\epsilon_e=\epsilon_{\overline e}$ [this describes compatibility
    of orientations of Seifert fibers of adjacent pieces or---if the
    edge connects a Seifert and a hyperbolic vertex---of Seifert fiber
    and slope].
\item\label{it:eqslope} The data described in items (\ref{it:slope})
  and (\ref{it:sign}) are subject to the equivalence relation
  generated by the following moves:
  \begin{enumerate}
\item \label{itit:c} For any type ``o''
  Seifert vertex the signs at all edges adjacent to it may be
  multiplied by $-1$ [reversal of orientation of the Seifert fibers].
  \item The slope $s_e$ of item (\ref{it:slope}) can be multiplied by
    $-1$ while simultaneously multiplying $\epsilon_e$ by $-1$.
\item\label{it:proj slopes}
  For any Seifert vertex, the slopes  at all
  adjacent hyperbolic vertices may be multiplied by a fixed non-zero
  rational number.
\end{enumerate}
\end{enumerate}
\end{definition}

Note that the data encoded by the sign weights modulo the equivalence
relation of item 
(\ref{itit:c}) 
are equivalent to
an element of $H^1(\Gamma\setminus \Gamma_n,\Gamma_h;\Z/2)$,
where $\Gamma_h$ is the hyperbolic subgraph and $\Gamma_n$ the full
subgraph on Seifert vertices of type ``n''.

We define a \emph{morphism of H--graphs}, $\pi\colon \Gamma\to \Gamma'$, to be
an open graph homomorphism which restricts to a \NAH--graph morphism
of the hyperbolic subgraphs (Definition~\ref{def:morphism}), preserves
the black/white coloring on the Seifert vertices, and on the edges
between hyperbolic and Seifert vertices preserves the slope (in the
sense that the slope at the image edge is the image under the tangent
map on cusp orbifolds of the slope at the source edge). Moreover, it
must map type ``n'' vertices to type ``n'' vertices, 
and when an ``o'' vertex $v$ is mapped
to an ``n'' vertex $w$, then the preimage of each edge at $w$ must
either include edges of different signs, or an edge terminating in a
type ``n'' Seifert vertex.

As in Section~\ref{sec:NAHgraph}, the existence of morphisms between 
H--graphs generates an equivalence relation which we call 
\emph{bisimilarity}. 

\begin{proof}[Proof of Classification Theorem]

  To prove the Classification Theorem 
  we will show that each bisimilarity
  class of H--graphs has a minimal element, and if a H--graph comes
  from a non-geometric manifold $M$ then the minimal H--graph
  determines and is determined by $\widetilde M$ up to quasi-isometry.

We first explain why the qi-type of the universal cover $\widetilde M$
(or equivalently of $\pi_1(M)$) determines a
minimal H--graph $\Omega(\widetilde M)$.

As in Section \ref{sec:nah graphs}, we can straighten any
quasi-isometry $\widetilde M\to\widetilde M'$ 
and assume it takes geometric pieces
to geometric pieces and slabs to slabs. We may also assume it is an
isometry on hyperbolic pieces.  A Seifert piece in $\widetilde M$ is
bi-Lipschitz homeomorphic to a fattened tree times $\R$, so the
fibration by $\R$ fibers is coarsely preserved, and we can straighten
it so it is actually preserved. Moreover, if an adjacent piece is
hyperbolic, then the straightened quasi-isometry is an isometry on the
corresponding flat, so the affine structure on fibers of the Seifert
piece is coarsely preserved, and we can straighten so that it is
actually preserved. However, where a Seifert piece is adjacent to a
Seifert piece the $\R\times\R$ product structure on the corresponding
flat (given by Seifert fibers on the two sides) is coarsely preserved,
but the affine structures on the $\R$ fibers need only be preserved up
to quasi-isometry.

Considering straightened quasi-isometries in the above sense, we
denote again
$$
\I(\widetilde M):=\{f\colon \widetilde M\to \widetilde M~|~ f\text{ is a
  straightened quasi-isometry}\}\,.
$$ 

As in Section \ref{sec:nah graphs}, the underlying graph for our
minimal H--graph $\Omega(\widetilde M)$ has a vertex for each orbit of
the action of $\I(\widetilde M)$ on the set of pieces of $\widetilde
M$ and edge for each orbit of the action on the set of slabs. The
labelling of the hyperbolic subgraph is as before; in particular, any
vertex corresponding to an orbit of hyperbolic pieces is labelled by
the hyperbolic orbifold obtained by quotienting a representative piece
in the orbit by its isotropy subgroup in $\I(\widetilde M)$. A Seifert
vertex of the H--graph is of type ``n'' if some element of
$\I(\widetilde M)$ takes a corresponding Seifert fibered piece to
itself reversing orientations of fibers, and is otherwise of type
``o''.  For each type ``o'' vertex we choose an orientation of the
fibers of one piece in the corresponding orbit and then extend
equivariantly to the other pieces in the orbit.

For a Seifert vertex adjacent to at least one hyperbolic vertex the
fibers of the corresponding pieces in $\widetilde M$ carry an affine
structure which is defined up to affine scaling. We choose a specific
scale for each such vertex, so we can speak of length along fibers,
and then the \emph{slope} of item (\ref{it:slope}) of Definition
\ref{def:Hgraph} is given by a tangent vector of unit length, viewed
in the adjacent cusp.

The sign weights of item (\ref{it:sign}) of Definition~\ref{def:Hgraph}
are then defined, and item (\ref{it:eqslope}) of that definition
reflects the choices of orientation and scale which were made.

By construction, the isomorphism type of $\Omega(\widetilde M)$ 
is determined by
$\widetilde M$ and thus two manifolds with quasi-isometric 
fundamental group have the same associated graph.  
It remains to show that the isomorphism type of $\Omega(\widetilde M)$
determines the bilipschitz homeomorphism type of $\widetilde M$. 

Construct a labelled graph $\wt\Omega$ from $\Omega=\Omega(\wt M)$
by first replacing each edge of $\Omega$ by infinitely many edges,
keeping the weights on edges, but adding sign weights to edges at
type ``n'' vertices with infinitely many of each sign, and then taking the universal cover of the
resulting weighted graph.  Finally, 
``o'' and ``n'' labels are now irrelevant and can be removed.

To associate a manifold $X=X(\wt\Omega)$ to this labelled graph, we
must glue together appropriate pieces according to the tree
$\wt\Omega$, with appropriate choices for the gluing between
slabs. The pieces for hyperbolic vertices will be universal covers of
the hyperbolic orbifolds which label them, while for a Seifert vertex
we take the universal cover of some fixed Seifert fibered manifold
with base of hyperbolic type and having a boundary component for each
incident edge in $\Omega$ and---if the vertex is a black vertex---an
additional boundary component to contribute to boundary of $X$. (The
universal cover $Y$ of this Seifert piece is then a fattened tree
times $\R$, and, as in \cite{behrstock-neumann}, it is in fact only
important that there is a bound $B$ such that for each boundary
component of $Y$ there are boundary components of all ``types'' within
distance $B$ of the give boundary component.)

The choices in gluing depend on the types of the abutting pieces:
Between hyperbolic pieces, the gluing map is, as before, determined up
to a group of isometries of the form $\R^2\rtimes F_S$, where $S$ is
the slab. Between Seifert fibered pieces the gluing will be an affine
map such that the fibers from the two pieces then intersect in the
intervening flat with sign given by the sign label of the
edge. Finally, between a Seifert fibered and a hyperbolic piece the
gluing will be an affine map matching unit tangent vector
along fibers with the slope vector for the hyperbolic piece. For each
edge of $\Omega$ we make a fixed choice of how to do the gluing
subject to the above constraints and do it this way for every
corresponding edge of $\wt\Omega$.

To complete the proof, it remains to show that, independent of these
choices, there exists a bilipschitz homeomorphism from $\wt M$ to $X$.

As in the proof of Proposition~\ref{prop:th1}, the desired bilipschitz
homeomorphism is built inductively, starting with a homeomorphism from
one piece of $\wt M$ to a piece of $X$ and then extending via adjacent
slabs to adjacent pieces. There are four cases: (1), when both
adjacent pieces are hyperbolic, this is exactly the case of
Proposition~\ref{prop:th1}; (2), when both pieces are Seifert fibered;
(3), extending from a hyperbolic piece to an adjacent Seifert fibered
piece; and (4), extending from a Seifert fibered piece to an adjacent
hyperbolic piece.
For Case (2) we use \cite[Theorem 1.3]{behrstock-neumann} (as in the
proof of \cite[Theorem 3.2]{behrstock-neumann}) to extend over the
adjacent Seifert fibered piece, respecting the ``types'' of boundary
components (i.e., belonging to boundary of $M$ or not, and if not,
then the ``type'' is given by the edge of $\Omega$ that the boundary
component corresponds to). Case (3) is essentially the same argument,
and Case (4) is immediate.

Thus we obtain the desired bilipschitz homeomorphism, completing the
proof.
\end{proof}

\begin{proof}[Proof of Realization Theorem] 
    The construction of $\Omega(\widetilde M)$, given above, has built
    in a morphism from the H-graph associated to $M$.  Since the graph
    associated to $M$ is balanced, it follows from
    Proposition~\ref{prop:minimal} that $\Omega(\widetilde M)$ is
    balanced.
   
    The balanced condition is only a constraint on NAH-graph
    components of an H-graph.  Thus, from the case of NAH-graphs 
    which we established in Section~\ref{sec:realizingNAH}, we may
    conclude that, assuming $CCC_{3}$, every balanced minimal H-graph
    is the minimal H-graph of some quasi-isometry class of good
    $3$--manifold group.
\end{proof}

\Notes

\endNotes

\end{document}